\documentclass{aims}
\usepackage{amsmath,amsthm,amscd,amssymb,bm}
  \usepackage{paralist}
  \usepackage{graphics} %% add this and next lines if pictures should be in esp format
  \usepackage{epsfig} %For pictures: screened artwork should be set up with an 85 or 100 line screen
\usepackage{graphicx}
\usepackage{enumerate}
\usepackage{epstopdf}%This is to transfer .eps figure to .pdf figure; please compile your paper using PDFLeTex or PDFTeXify.
\usepackage{algpseudocode}
\usepackage{algorithm}
 \usepackage[colorlinks=true]{hyperref}
   % Warning: when you first run your tex file, some errors might occur,
   % please just press enter key to end the compilation process, then it will be fine if you run your tex file again.
   % Note that it is highly recommended by AIMS to use this package.
\hypersetup{urlcolor=blue, citecolor=red}

  \textheight=8.2 true in
   \textwidth=5.0 true in
    \topmargin 30pt
     \setcounter{page}{1}

% The next 5 line will be entered by an editorial staff.

\providecommand{\norm}[1]{\left\lVert#1\right\rVert}

\newcommand{\vb}{\mathbf{b}}

\newcommand{\vd}{\mathbf{d}}
\newcommand{\ve}{\mathbf{e}}

\newcommand{\vn}{\mathbf{n}}
\newcommand{\vr}{\mathbf{r}}

\newcommand{\vw}{\mathbf{w}}
\newcommand{\vx}{\mathbf{x}}
\newcommand{\vy}{\mathbf{y}}

\newcommand{\vo}{\mathbf{0}}

\newcommand{\cD}{\mathcal{D}}

\DeclareMathOperator*{\argmin}{argmin}
\DeclareMathOperator{\supp}{supp}
\newcommand{\st}{\mathrm{s.t.}}

\newtheorem{theorem}{Theorem}[section]

\newtheorem{lemma}[theorem]{Lemma}

\theoremstyle{definition}
\newtheorem{definition}[theorem]{Definition}

\newtheorem{assump}{Assumption}{}

%% Revision
\usepackage{xcolor}

\title[Stochastic Greedy Algorithms for MMV]{Stochastic Greedy Algorithms For Multiple Measurement Vectors}

%% Group authors per affiliation:
\author[J. Qin, S. Li, D. Needell, A. Ma, R. Grotheer, C. Huang and N. Durgin]{}

% It is required to enter 2010 MSC.
\subjclass{Primary: 68W20, 94A12; Secondary: 47N10.}
% Please provide minimum  5 keywords.
\keywords{Compressive sensing, multiple measurement vectors, stochastic optimization, joint sparsity, signal recovery, video recovery.}

\email{jing.qin@uky.edu}
\email{shuangli@mymail.mines.edu}
\email{deanna@math.ucla.edu}
\email{anna.ma@uci.edu}
\email{grotheerre@wofford.edu}
\email{chenxi.huang@yale.edu}
\email{njdurgin@gmail.com}

\thanks{$^*$ Corresponding author: Jing Qin}

\begin{document}
\maketitle
\centerline{\scshape Jing Qin$^*$}
\medskip
{\footnotesize
 % please put the address of the second  and third author
   \centerline{Department of Mathematics}
   \centerline{University of Kentucky, Lexington, KY 40506}
}
\medskip
\centerline{\scshape Shuang Li}
\medskip
{\footnotesize
 % please put the address of the second  and third author
   \centerline{Department of Mathematics}
   \centerline{University of California, Los Angeles, CA 90095}
}

\medskip
\centerline{\scshape Deanna Needell}
\medskip
{\footnotesize
 % please put the address of the second  and third author
   \centerline{Department of Mathematics}
   \centerline{University of California, Los Angeles, CA 90095}
}

\medskip
\centerline{\scshape Anna Ma}
\medskip
{\footnotesize
 % please put the address of the second  and third author
   \centerline{Department of Mathematics}
   \centerline{University of California, Irvine, CA 92697}
}

\medskip
\centerline{\scshape Rachel Grotheer}
\medskip
{\footnotesize
 % please put the address of the second  and third author
   \centerline{Center for Data, Mathematical, and Computational Sciences}
   \centerline{Goucher College, Baltimore, MD 21204}
}

\medskip
\centerline{\scshape Chenxi Huang}
\medskip
{\footnotesize
 % please put the address of the second  and third author
   \centerline{Center for Outcomes Research and Evaluation}
   \centerline{Yale University, New Haven, CT 06511}
}

\medskip
\centerline{\scshape Natalie Durgin}
\medskip
{\footnotesize
 % please put the address of the second  and third author
   \centerline{Spiceworks, Austin, TX 78746}
}

\bigskip
%\centerline{(Communicated by the associate editor name)}

\begin{abstract}
Sparse representation of a single measurement vector (SMV) has been explored in a variety of compressive sensing applications. Recently, SMV models have been extended to solve multiple measurement vectors (MMV) problems, where the underlying signal is assumed to have joint sparse structures. To circumvent the NP-hardness of the $\ell_0$ minimization problem, many deterministic MMV algorithms solve the convex relaxed models with limited efficiency. In this paper, we develop stochastic greedy algorithms for solving the joint sparse MMV reconstruction problem. In particular, we propose the MMV Stochastic Iterative Hard Thresholding (MStoIHT) and MMV Stochastic Gradient Matching Pursuit (MStoGradMP) algorithms, and we also utilize the mini-batching technique to further improve their performance. Convergence analysis indicates that the proposed algorithms are able to converge faster than their SMV counterparts, i.e., concatenated StoIHT and StoGradMP, under certain conditions. Numerical experiments have illustrated the superior effectiveness of the proposed algorithms over their SMV counterparts.
\end{abstract}

\section{Introduction}\label{sec:intro}
Reconstruction of sparse signals from limited measurements has been studied extensively with a variety of applications in various imaging sciences, machine learning, computer vision and so on. The major problem is to reconstruct a signal which is sparse by itself or in some transformed domain from a small number of measurements (or observations). Let $\vx\in\mathbb{R}^n$ be the signal to be reconstructed. Then the sparse signal reconstruction problem can be formulated as an $\ell_0$ constrained minimization problem
\begin{equation}\label{eqn:SMVmodel}
\min_{\vx\in\mathbb{R}^n}F(\vx),\quad\mbox{s.t.}\quad \norm{\vx}_0\leq k,
\end{equation}
where the sparsity $\norm{\vx}_0$ counts the number of nonzero elements in $\vx$. Here $F(\vx)$ is a loss function measuring the discrepancy between the acquired measurements $\vy\in\mathbb{R}^{m}$ ($m\ll n$) and the measurements predicted by the estimated solution. In particular, if the measurements are linearly related to the underlying signal, i.e., there exists a sensing matrix $A\in\mathbb{R}^{m\times n}$ such that $\vy=A\vx+\vn$ where $\vn$ is the Gaussian noise, then the least squares loss function is widely used:
$F(\vx)=\frac12\norm{\vy-A\vx}_2^2$.
Then \eqref{eqn:SMVmodel} becomes a single measurement vector (SMV) sparse signal reconstruction problem \cite{cotter2005sparse}. The choice of $F$ depends on the generation mechanism of the measurements. Since the measurements are often generated in real time in many imaging techniques, it becomes significantly important in practice to reconstruct a collection of sparse signals, expressed as a signal matrix, from multiple measurement vectors (MMV). More precisely, the signal matrix $X\in\mathbb{R}^{n\times L}$ with $k$ ($k\leq n$) nonzero rows can be obtained by solving the following MMV model
\begin{equation}\label{eqn:MMVmodel}
\min_{X\in\mathbb{R}^{n\times L}}F(X),\quad\mbox{s.t.}\quad \norm{X}_{r,0}\leq k,
\end{equation}
where $\norm{X}_{r,0}$ stands for the row-sparsity of $X$ which counts nonzero rows in $X$. In the MMV literature, the column size $L$ of $X$ typically represents the number of snapshots of a signal. Note that it is possible that certain columns of $X$ have more zero components than zero rows of $X$. The MMV sparse reconstruction problem was first introduced in magnetoencephalography (MEG) imaging \cite{cotter2005sparse}, and has been extended to other applications \cite{he2008cg,bazerque2010distributed,majumdar2011joint,majumdar2012face,majumdar2013rank,davies2012rank,li2017atomic}.

Many SMV algorithms can be applied to solve MMV problems. The most straightforward way is to use SMV algorithms to reconstruct each signal vector sequentially or simultaneously via parallel computing, and then concatenate all resultant signals to form the estimated signal matrix. We call these types of algorithms, \emph{concatenated SMV} algorithms. On the other hand, the MMV problem can be converted to an SMV one by column-wise stacking the unknown signal matrix $X$ as a vector and introducing a block diagonal matrix as the new sensing matrix $A$. However, neither approach fully takes advantage of the joint sparse structure of the underlying signal matrix, and lack computational efficiency as well. In this paper, we develop MMV algorithms without concatenation of the SMV results or vectorization of the unknown signal matrix.

Since the $\ell_0$ term in \eqref{eqn:SMVmodel} and \eqref{eqn:MMVmodel} is non-convex and non-differentiable, many classical convex optimization algorithms fail to produce a satisfactory solution. To handle the NP-hardness of the problem, many convex relaxation methods and their MMV extensions have been developed, e.g., the $\ell_2$-regularized M-FOCUSS \cite{cotter2005sparse} and the $\ell_1$-regularized MMV extensions of the alternating direction method of multipliers \cite{lu2011fast,Jian2015split}. By exploiting the relationship between the measurements and the correct atoms, multiple signal classification (MUSIC) \cite{schmidt1986multiple,feng1996spectrum} and its improved variants \cite{kim2012compressive,lee2012subspace} have been developed. However, in the rank defective cases when the rank of the measurement matrix is much smaller than the desired row-sparsity level, the MUSIC type of methods will mostly fail to identify the correct atoms. The third category of algorithms for solving the $\ell_0$ constrained problem is the class of greedy algorithms that seek the sparsest solution by updating the support iteratively. SMV greedy algorithms include Orthogonal Matching Pursuit (OMP) \cite{pati1993orthogonal,tropp2004greed} and its RIP condition discussion \cite{zhang2011sparse}, simultaneous OMP (S-OMP) \cite{chen2006theoretical,tropp2006algorithms}, Compressive Sampling Matching Pursuit (CoSaMP) \cite{needell2009cosamp}, Regularized OMP (ROMP) \cite{needell2010signal}, Subspace-Pursuit (SP) \cite{dai2009subspace}, Iterative Hard-Thresholding (IHT) \cite{blumensath2009iterative}, Normalized IHT \cite{blumensath2010normalized}, Hard Thresholding Pursuit (HTP) \cite{foucart2011hard}, gradient hard thresholding \cite{yuan2014gradient}, and conjugate gradient IHT (CGIHT) \cite{blanchard2015cgiht}. It has been shown that CoSaMP and IHT are more efficient than the convex relaxation methods with strong recovery guarantees \cite{needell2009cosamp}. Greedy algorithms have also been extended to cosparse analysis to find the nearest cosparse subspace to a vector \cite{giryes2014greedy}. However, most of these algorithms work for compressive sensing applications where $F$ is a least squares loss function. Recently, the Gradient Matching Pursuit (GradMP) \cite{nguyenunified} has been proposed to extend CoSaMP to handle more general loss functions. To further improve efficiency and consider the non-convex objective function case, Stochastic IHT (StoIHT) and Stochastic GradMP (StoGradMP) have been proposed \cite{nguyen2014linear}. Nevertheless, concatenated extension of the aforementioned SMV greedy algorithms to the MMV versions will result in limited performance especially for large data sets. Using the row sparsity, MMV extensions of SMV greedy algorithms have been developed, e.g., M-OMP, M-SP, M-CoSaMP and M-HTP in \cite{blanchard2014greedy} and references therein. Based on countable and uncountable set of measurements, a class of models consider infinite measurements vectors (IMV) broader than MMV which have been proposed in \cite{mishali2008reduce}.

In this paper, we propose the MMV Stochastic IHT (MStoIHT) and the MMV Stochastic GradMP (MStoGradMP) algorithms for solving the general MMV joint sparse recovery problem \eqref{eqn:MMVmodel}. To accelerate convergence, the mini-batching technique is applied to the proposed algorithms. We also theoretically show that under certain conditions the proposed algorithms converge faster than their SMV concatenated counterparts, i.e., CStoIHT and CStoGradMP. Note that CStoIHT and CStoGradMP may fail to converge to a joint sparse solution without the columnwise separability of the objective function. The naming rule for the algorithms follows: initial C - concatenated, initial M - MMV, initial B - batched. A large variety of numerical experiments on joint sparse matrix recovery and video sequence recovery have demonstrated the superior performance of the proposed algorithms over their SMV counterparts in terms of running time and accuracy. Please refer to \cite{durgin2019fast} for our more recent work on MStoGradMP applied to solve the hyperspectral diffuse optical tomography imaging problem, which has shown the great potential of stochastic greedy algorithms on recovering large-scale data with joint sparsity.

\textbf{Organization.} The rest of the paper is organized as follows. Preliminary knowledge and notation clarifications are provided in Section \ref{sec:pre}. Section \ref{sec:algo} presents the concatenated SMV algorithms, and the proposed stochastic greedy algorithms, i.e., MStoIHT and MStoGradMP, in detail. Section \ref{sec:batch} discusses how to apply the mini-batching technique to accelerate the proposed algorithms. Convergence analysis is provided in Section \ref{sec:convergence}. By choosing the widely used least squares loss function as $F$, joint sparse signal recovery in distributed compressive sensing is discussed in Section \ref{sec:DCS}. Extensive numerical results are shown in Section \ref{sec:exp}. Finally, some concluding remarks are made in Section \ref{sec:con}.

\section{Preliminaries}\label{sec:pre}
To make the paper self-contained, we first introduce some useful notation and definitions, and then briefly describe the related algorithms, i.e., StoIHT and StoGradMP. Let $[m]=\{1,2,\ldots,m\}$ and $|\Omega|$ be the number of elements in the set $\Omega$. Consider a finite atom set $\mathcal{D}=\{\vd_1,\ldots,\vd_{N}\}$ (a.k.a. the dictionary) with each atom $\vd_i\in\mathbb{R}^n$.

\subsection{Vector and Matrix Sparsity}
Assume that a vector $\vx\in\mathbb{R}^n$ can be written as a linear combination of $\vd_i$'s, i.e., $\vx=\sum_{i=1}^{N}\alpha_i\vd_i=D\alpha$ with
$D=[\vd_1,\ldots,\vd_{N}]$ and $\alpha=(\alpha_1,\ldots,\alpha_N)^T$.
Then the support of $\vx$ with respect to $\alpha$ and $\mathcal{D}$ is defined by
\[
\supp_{\alpha,\mathcal{D}}(\vx)=\{i\in[N]:\alpha_i\neq0\}:=\supp(\alpha).
\]
The $\ell_0$-norm of $\vx$ with respect to $\mathcal{D}$ is defined as the minimal support size
\[
\norm{\vx}_{0,\mathcal{D}}=\min_{\alpha}\{|T|:\vx=\sum_{i\in T}\alpha_i\vd_i,T\subseteq[N]\}.
\]
Since absolute homogeneity does not hold in general, i.e., $\norm{\gamma\vx}_{0,\mathcal{D}}=|\gamma|\norm{\vx}_{0,\mathcal{D}}$ holds if and only if $|\gamma|=1$, this $\ell_0$-norm is not a norm. Here the smallest support $\supp_{\alpha,\mathcal{D}}(\vx)$ is called the support of $\vx$ with respect to $\mathcal{D}$, denoted by $\supp_{\mathcal{D}}(\vx)$. Thus
\[
|\supp_{\mathcal{D}}(\vx)|=\norm{\vx}_{0,\mathcal{D}}.
\]
Note that the support may not be unique if $\mathcal{D}$ is over-complete in that there could be multiple representations of $\vx$ with respect to the atom set $\mathcal{D}$ due to the linear dependence of the atoms in $\mathcal{D}$. In this paper, we assume that $\mathcal{D}$ is chosen so that the support of any arbitrary vector is unique, e.g., $\mathcal{D}$ is chosen as the standard basis in a Euclidean space or a learned basis. Furthermore, Donoho defines a general class of \emph{unconditional basis} in \cite{donoho1993unconditional}, e.g., the wavelet basis, which requires the unique representation and thereby is also applicable to our MMV algorithms. Without this uniqueness representation guarantee, we could either restrict the solution set or take a transformation of the unknown variable.

The vector $\vx$ is called \emph{$k$-sparse} with respect to $\mathcal{D}$ if $\norm{\vx}_{0,\mathcal{D}}\leq k$. For a subset $\Gamma\subseteq [N]$, we denote the set of atoms from $\mathcal{D}$ with indices restricted to $\Gamma$ by $\mathcal{D}_\Gamma$, and denote the subspace of $\mathbb{R}^n$ spanned by the atoms in $\mathcal{D}_\Gamma$ by $\mathcal{R}_{\mathcal{D}_\Gamma}$. Given a vector $\vw\in\mathbb{R}^n$, we use $\mathcal{P}_\Gamma \vw$ to denote the orthogonal projection of $\vw$ onto the subspace $\mathcal{R}(\mathcal{D}_\Gamma)$.

We extend vector sparsity and define the row sparsity for a matrix $X\in\mathbb{R}^{n\times L}$ as follows
\[
\norm{X}_{r,0,\mathcal{D}}=\min_{\Omega}\{|\Omega|:\Omega=\bigcup_{j=1}^L\supp_{\mathcal{D}}(X_{\cdot,j})\},
\]
where $X_{\cdot,j}$ is the $j$-th column of $X$. Here the minimal common support $\Omega$ is called the (row-wise) \emph{joint support} of $X$ with respect to $\mathcal{D}$, denoted by $\supp_{\mathcal{D}}^r(X)$, which satisfies
$|\supp_{\mathcal{D}}^r(X)|=\norm{X}_{r,0,\mathcal{D}}$.
The matrix $X$ is called $k$-row sparse with respect to $\mathcal{D}$ if all columns of $X$ share a joint support of size at most $k$ with respect to $\mathcal{D}$, i.e.,
$\norm{X}_{r,0,\mathcal{D}}\leq k$.

\subsection{Functions Defined on A Matrix Space}
Given a function $f:\mathbb{R}^{n\times L}\to \mathbb{R}$, the matrix derivative is defined by concatenating gradients \cite{magnus2010concept}
\begin{equation}\label{eqn:df}
\frac{\partial f}{\partial X}=\left[\frac{\partial f}{\partial X_{i,j}}\right]_{n\times L}
=\begin{bmatrix}\nabla_{X_{\cdot,1}}f&\cdots&\nabla_{X_{\cdot,L}}f\end{bmatrix},
\end{equation}
where $X_{i,j}$ is the $(i,j)$-th entry of $X$.
Notice that
\[
\norm{X}_F^2=\sum_{i=1}^n\norm{X_{i,\cdot}}_2^2=\sum_{j=1}^L\norm{X_{\cdot,j}}_2^2=\mathrm{Tr}(X^TX),
\]
where $X_{i,\cdot}$ is the $i$-th row vector of $X$, and $\mathrm{Tr}(\cdot)$ is the trace operator to add up all the diagonal entries of a matrix.
The inner product for any two matrices $U,V\in\mathbb{R}^{n\times L}$ is defined as
$\langle U,V\rangle=\mathrm{Tr}(U^TV)$.
Note that the equality
\[
\norm{U+V}_F^2=\norm{U}_F^2+\norm{V}_F^2+2\langle U,V\rangle
\]
and the Cauchy-Schwartz inequality
\[
\langle U,V\rangle \leq \norm{U}_F\norm{V}_F
\]
still hold. By generalizing the concepts in \cite{nguyen2014linear}, we define the $\mathcal{D}$-restricted strong convexity property and the strong smoothness property (a.k.a. the Lipschitz condition on the gradient) for the functions defined on a matrix space. It is worth noting the variants of these two concepts have been used to study the convergence of the projected
gradient descent algorithm \cite{agarwal2010fast}.
\begin{definition}\label{def:DRSC}
The function $f:\mathbb{R}^{n\times L}\to\mathbb{R}$ satisfies the $\mathcal{D}$-restricted strong convexity ($\mathcal{D}$-RSC) if there exists $\rho_{k}^->0$ such that
\begin{equation}\label{eqn:DRSC}
f(X')-f(X)-\Big\langle \frac{\partial f}{\partial X}(X),X'-X\Big\rangle\geq \frac{\rho_{k}^-}2\norm{X'-X}_F^2
\end{equation}
for $X',X\in\mathbb{R}^{n\times L}$ with $|\supp_{\mathcal{D}}^r(X)\cup\supp_{\mathcal{D}}^r(X')|\leq k$.
\end{definition}

\begin{definition}\label{def:DRSS}
The function $f:\mathbb{R}^{n\times L}\to \mathbb{R}$ satisfies the $\mathcal{D}$-restricted strong smoothness ($\mathcal{D}$-RSS) if there exists $\rho_{k}^+>0$ such that
\begin{equation}\label{eqn:DRSS}
\norm{\frac{\partial f}{\partial X}(X)-\frac{\partial f}{\partial X}(X')}_F\leq \rho_{k}^+\norm{X-X'}_F
\end{equation}
for $X',X\in\mathbb{R}^{n\times L}$ with $|\supp_{\mathcal{D}}^r(X)\cup\supp_{\mathcal{D}}^r(X')|\leq k$.
\end{definition}

\subsection{Related Work}
Many stochastic greedy algorithms based on matching pursuit and hard thresholding have been proposed to solve the SMV problem. In particular, GradMP generalizes CoSaMP to handle general convex objective functions \cite{nguyenunified}. Motivated by the success of stochastic convex optimization, StoIHT (see Algorithm~\ref{alg:StoIHT}) and StoGradMP (see Algorithm~\ref{alg:StoGradMP}) have been later proposed to solve the $\ell_0$ constrained SMV problem \cite{nguyen2014linear}
\begin{equation}\label{eqn:smvmodel}
\min_{\vx\in\mathbb{R}^n}\frac1M\sum_{i=1}^M\tilde{f}_i(\vx),\quad\mbox{subject to}\quad \norm{\vx}_{0,\mathcal{D}}\leq k,
\end{equation}
At each iteration of StoIHT, one component function $\tilde{f}_i:\mathbb{R}^n\to\mathbb{R}$ is first randomly selected with probability $p(i)$. Here the input discrete probability distribution $p(i)$'s satisfy $\sum_{i=1}^Mp(i)=1$, and $p(i)\geq0,\,i=1,\ldots,M$.
Next, in the ``Proxy'' step, gradient descent along the selected component is performed. Then the last two steps, i.e., ``Identify'' and ``Estimate'', essentially project the gradient descent result to its best $k$-sparse approximation.
Given $\vw=(w_1,\ldots,w_n)^T$ and $\eta\geq1$, the best $k$-sparse approximation operator acted on $\vw$ and $\eta$, denoted by $\mathrm{approx}_k(\vw,\eta)$, constructs an index set $\Gamma$ with $|\Gamma|=k$ such that
\begin{equation}\label{eqn:PGamma}
\norm{\mathcal{P}_\Gamma \vw-\vw}_2\leq \eta\norm{\vw-\vw_{(k)}}_2,\quad
\vw_{(k)}=\argmin_{\substack{\vy\in \mathcal{R}(\mathcal{D}_\Gamma)\\ |\Gamma|\leq k}}\norm{\vw-\vy}_2.
\end{equation}
Thus, if $\Gamma^*=\argmin_{\Gamma:|\Gamma|\leq k}\norm{\vw-\mathcal{P}_\Gamma\vw}_2$, we have $\norm{\vw-\mathcal{P}_\Gamma \vw}_2\leq \eta\norm{\vw-\mathcal{P}_{\Gamma^*}\vw}_2$, and $\vw_{(k)}$ defined in \eqref{eqn:PGamma} becomes the best $k$-sparse approximation of $\vw$ in the subspace $\mathcal{R}(\mathcal{D}_{\Gamma^*})$. In particular, if $\eta\geq1$ and $\mathcal{D}=\{\ve_i:\,i=1,2,\ldots,n\}$ with $\ve_i=[0,\ldots,\underset{(i)}{1},\ldots,0]^T$, then $\mathrm{approx}_k(\vw,\eta)$ returns the index set of the first $k$ largest entries of $\vw$ in absolute value, i.e.,
\[
\mathrm{approx}_k(\vw,1)=\{i_1,\ldots,i_k:|w_{i_1}|\geq\ldots\geq|w_{i_k}|\geq\ldots\geq |w_{i_n}|\}:=\widehat{\Gamma}.
\]
Then the projection $\mathcal{P}_\Gamma \vw$ reads as in componentwise form
\[
\big(\mathcal{P}_{\widehat{\Gamma}}(\vw)\big)_j=\left\{\begin{aligned}
&w_j&&\mbox{if }j\in\widehat{\Gamma},\\
&0&&\mbox{if }j\notin\widehat{\Gamma}.
\end{aligned}\right.
\]
There are two widely used stopping criteria: $\frac{\big\|\vx^{t+1}-\vx^t\big\|_2}{\norm{\vx^t}_2}<\varepsilon$, and
$\frac1M\sum_{i=1}^M\tilde{f}_i(\vx^t)<\varepsilon$,
where $\varepsilon>0$ is a small tolerance. It is well known that the first stopping criteria is more robust in practice \cite{burden2004numerical}.
\begin{algorithm}
\caption{Stochastic Iterative Hard Thresholding (StoIHT)}\label{alg:StoIHT}
\begin{algorithmic}
\State\textbf{Input:} $k,\gamma,\eta,p(i),\varepsilon$.
\State\textbf{Output:} $\widehat{\vx}=\vx^t$.
\State\textbf{Initialize:} $\vx^0=\mathbf{0}$.
\For {$t=0,1,2,\ldots,T$}
\State Randomly select an index $i_t\in\{1,2,\ldots,M\}$ with probability $p(i_t)$
\State Proxy: $\vb^t=\vx^t-\frac{\gamma}{Mp(i_t)}\nabla \tilde{f}_{i_t}(\vx^t)$
\State Identify: $\Gamma^t=\mathrm{approx}_k(\vb^t,\eta)$.
\State Estimate: $\vx^{t+1}=\mathcal{P}_{\Gamma^t}(\vb^t)$.
\State If the stopping criteria are met, exit.
\EndFor
\end{algorithmic}
\end{algorithm}

Different from StoIHT, StoGradMP involves gradient matching, i.e., seeking the best $k$-sparse approximation of the gradient rather than the solution. At the solution estimation step, the original problem is restricted to the components from the estimated support. It has been empirically shown that StoGradMP converges faster than StoIHT due to the more accurate estimation of the support \cite{nguyen2014linear}. But StoGradMP requires that the sparsity level $k$ is no more than $n/2$.
\begin{algorithm}
\caption{Stochastic Gradient Matching Pursuit (StoGradMP)}\label{alg:StoGradMP}
\begin{algorithmic}
\State\textbf{Input:} $k,\eta_1,\eta_2,p(i),\varepsilon$.
\State\textbf{Output:} $\widehat{\vx}=\vx^{t}$.
\State\textbf{Initialize:} $\vx^0=\mathbf{0}$, $\Lambda=\emptyset$.
\For {$t=0,1,2,\ldots,T$}
\State Randomly select an index $i_t\in\{1,2,\ldots,M\}$ with probability $p(i_t)$
\State Calculate the gradient $\vr^t=\nabla \tilde{f}_{i_t}(\vx^t)$
\State $\Gamma=\mathrm{approx}_{2k}(\vr^t,\eta_1)$
\State $\widehat{\Gamma}=\Gamma\cup\Lambda$
\State $\vb^t=\argmin_{\vx} \frac1M\sum_{i=1}^M\tilde{f}_i(\vx),\quad \vx\in\mathcal{R}(\mathcal{D}_{\widehat{\Gamma}})$
\State $\Lambda=\mathrm{approx}_k(\vb^t,\eta_2)$
\State $\vx^{t+1}=\mathcal{P}_{\Lambda}(\vb^t)$
\State If the stopping criteria are met, exit.
\EndFor
\end{algorithmic}
\end{algorithm}

\section{Proposed Stochastic Greedy Algorithms}\label{sec:algo}
In this section, we present concatenated SMV algorithms, and develop stochastic greedy algorithms for MMV problems based on StoIHT and StoGradMP. Suppose that there are $M$ differentiable and convex functions $f_i:\mathbb{R}^{n\times L}\to\mathbb{R}$ that satisfy the $\mathcal{D}$-restricted strong smoothness property (see Definition~\ref{def:DRSS}), and their mean
\begin{equation}\label{eqn:F}
F(X)=\frac1M\sum_{i=1}^Mf_i(X)
\end{equation}
satisfies the $\mathcal{D}$-restricted strong convexity property (see Definition~\ref{def:DRSC}). These assumptions will be used extensively throughout the entire paper. Consider the row-sparsity constrained MMV problem
\begin{equation}\label{eqn:model}
\min_{X\in\mathbb{R}^{n\times L}}\frac1M\sum_{i=1}^Mf_i(X),\quad\mbox{subject to}\quad \norm{X}_{r,0,\mathcal{D}}\leq k.
\end{equation}
By vectorizing $X$, i.e., rewriting $X$ as a vector $\vx\in\mathbb{R}^{nL}$ by columnwise stacking, we can \emph{relax} \eqref{eqn:model} to a sparsity constrained SMV problem of the form \eqref{eqn:smvmodel} where the sparsity level $k$ is replaced by $kL$. Since $\norm{\vx}_{0,\mathcal{D}}\leq kL$ does not necessarily guarantee $\norm{X}_{r,0,\mathcal{D}}\leq k$, the solution to the relaxed problem may not be the same as the vectorization of the solution to \eqref{eqn:model}. On the other hand, the iterative stochastic algorithms such as StoIHT and StoGradMP, can be developed to the \emph{concatenated} versions for solving \eqref{eqn:model} under the following assumption on the objective function $f_i$'s.

\begin{definition}\label{def:colsep}
Each component $f_i$ of the objective function $F$ in \eqref{eqn:model} is {columnwise separable}, in the sense that a collection of functions $g_{i,j}:\mathbb{R}^n\to\mathbb{R}$ exist with
\begin{equation}\label{eqn:colsep}
f_i(X)=\sum_{j=1}^Lg_{i,j}(X_{\cdot,j}),\quad i=1,\ldots,M.
\end{equation}
In this case, the objective function can be rewritten as
\begin{equation}\label{eqn:Fj}
F(X)=\sum_{j=1}^L\left(\frac1M\sum_{i=1}^Mg_{i,j}(X_{\cdot,j})\right):=\sum_{j=1}^L\widehat{F_j}(X_{\cdot,j}),
\end{equation}
which implies that minimizing $F$ with respect to $X$ can be reduced to minimizing $\widehat{F_j}$ for $j=1,\ldots,L$ individually.
\end{definition}

Under this assumption, the concatenated algorithms, i.e., CStoIHT in Algorithm~\ref{alg:CStoIHT} and CStoGradMP in Algorithm~\ref{alg:CStoGradMP}, can be applied to solve \eqref{eqn:model}, which essentially reconstruct each column of $X$ by solving the SMV problem \eqref{eqn:smvmodel}. Notice that the outer loops of CStoIHT and CStoGradMP can be executed in a parallel manner on a multi-core computer, when the order of the inner loop and the outer loop in Algorithm~\ref{alg:CStoIHT} can be swapped. Note that although a joint support is not enforced at each iteration, the paralleled version of CStoIHT and CStoGradMP can still yield joint sparse results with guaranteed convergence (see Theorem~\ref{thm:SMVStoIHT} and Theorem~\ref{thm:SMVStoGradMP}). However, if the sparsity level $k$ is very large, then the support sets of $X_{\cdot,j}$'s are prone to overlap less initially which results in the less accurate estimation of the joint support and larger errors in the initial iterates. In addition, for some nonlinear function $f_i(X)$ which can not be separated as a sum of functions for columns of $X$, it will be challenging to find an appropriate objective function for the corresponding SMV problem.

\begin{algorithm}
\caption{Concatenated Stochastic Iterative Hard Thresholding (CStoIHT)}\label{alg:CStoIHT}
\begin{algorithmic}
\State\textbf{Input:} $k,\gamma,\eta,p(i),\varepsilon$.
\State\textbf{Output:} $\widehat{X}=X^t$.
\State\textbf{Initialize:} $X^0=\mathbf{0}\in\mathbb{R}^{n\times L}$.
\For {$j=1,\ldots,L$}
\For {$t=0,1,2,\ldots,T$}
\State Randomly select an index $i_t\in\{1,2,\ldots,M\}$ with probability $p(i_t)$
\State Proxy: $\vb^t=X_{\cdot,j}^t-\frac{\gamma}{Mp(i_t)}\nabla g_{i_t,j}(X_{\cdot,j}^t)$
\State Identify: $\Gamma^t=\mathrm{approx}_k(\vb^t,\eta)$.
\State Estimate: $X_{\cdot,j}^{t+1}=\mathcal{P}_{\Gamma^t}(\vb^t)$.
\State If the stopping criteria are met, exit.
\EndFor
\EndFor
\end{algorithmic}
\end{algorithm}

\begin{algorithm}
\caption{Concatenated Stochastic Gradient Matching Pursuit (CStoGradMP)}\label{alg:CStoGradMP}
\begin{algorithmic}
\State\textbf{Input:} $k,\eta_1,\eta_2,p(i),\varepsilon$.
\State\textbf{Output:} $\widehat{X}=X^{t}$.
\State\textbf{Initialize:} $X^0=\mathbf{0}\in\mathbb{R}^{n\times L}$, $\Lambda=\emptyset$.
\For {$j=1,\ldots,L$}
\For {$t=0,1,2,\ldots,T$}
\State Randomly select an index $i_t\in\{1,2,\ldots,M\}$ with probability $p(i_t)$
\State Calculate the gradient $\vr^t=\nabla g_{i_t,j}(X_{\cdot,j}^t)$
\State $\Gamma=\mathrm{approx}_{2k}(\vr^t,\eta_1)$
\State $\widehat{\Gamma}=\Gamma\cup\Lambda$
\State $\vb^t=\argmin_{\vx} \frac1M\sum_{i=1}^Mg_{i,j}(\vx),\quad \vx\in\mathcal{R}(\mathcal{D}_{\widehat{\Gamma}})$
\State $\Lambda=\mathrm{approx}_k(\vb^t,\eta_2)$
\State $X_{\cdot,j}^{t+1}=\mathcal{P}_{\Lambda}(\vb^t)$
\State If the stopping criteria are met, exit.
\EndFor
\EndFor
\end{algorithmic}
\end{algorithm}

To circumvent the aforementioned issues, we first propose the MMV Stochastic Iterative Hard Thresholding algorithm (MStoIHT) detailed in Algorithm \ref{alg:MStoIHT}. Compared to StoIHT, MStoIHT replaces the gradient by the matrix derivative \eqref{eqn:df}. The second significant difference lies in the ``Identify" and ``Estimate" steps, especially the operator $\mathrm{approx}_k(\cdot,\cdot)$. Now we extend the operator $\mathrm{approx}_k(\cdot,\cdot)$ from sparse vectors to row-sparse matrices. Given $X\in\mathbb{R}^{n\times L}$ and $\eta\geq1$, the best $k$-row sparse approximation operator acted on $X$ and $\eta$, denoted by $\mathrm{approx}_k^r(X,\eta)$, constructs a \emph{row} index set $\Gamma$ such that
\[
\norm{\mathcal{P}_{\Gamma}X_{\cdot,j}-X_{\cdot,j}}_2\leq \eta\norm{X_{\cdot,j}-(X_{\cdot,j})_k}_2,\quad j=1,\ldots,L,
\]
where $(X_{\cdot,j})_k$ is the best $k$-sparse approximation of the column vector $X_{\cdot,j}$ with respect to $\mathcal{D}$.
Note that we can choose $\Gamma$ as the union of all $\Gamma_j$'s if $\Gamma_j$ is the index set satisfying the inequality with a specified index $j$.
In particular, if $\mathcal{D}=\{\ve_i:i=1,\ldots,n\}$ and $\eta=1$, $\mathrm{approx}_{k}^r(X,1)$ returns the row index set of the first $k$ largest $\ell_2$ row norms in $X$, i.e.,
\[
\mathrm{approx}_k^r(X,1)=\{i_1,\ldots,i_k:\|X_{i_1,\cdot}\|_2\geq\ldots\geq \|X_{i_n,\cdot}\|_2\}:=\widetilde{\Gamma}.
\]
By abusing the notation, we define $\mathcal{P}_{\widetilde{\Gamma}}(X)$ to be the projection of $X$ onto the subspace of all row-sparse matrices with row indices restricted to $\widetilde{\Gamma}$. Therefore, we have
$
\mathcal{P}_{\widetilde{\Gamma}}X
=\begin{bmatrix}\mathcal{P}_{\widetilde{\Gamma}}X_{\cdot,1}&
\ldots&\mathcal{P}_{\widetilde{\Gamma}}X_{\cdot,L}\end{bmatrix}.
$
Due to the common support $\widetilde{\Gamma}$, the projection $\mathcal{P}_{\widetilde{\Gamma}}(X)$ can also be written as in row-wise form
\begin{equation}\label{eqn:proj}
\big(\mathcal{P}_{\widetilde{\Gamma}}(X)\big)_{j,\cdot}=\left\{
\begin{aligned}
&X_{j,\cdot}&&\mbox{if }j\in\widetilde{\Gamma},\\
&\vo&&\mbox{if }j\notin\widetilde{\Gamma}.
\end{aligned}
\right.
\end{equation}
Here $\mathcal{P}_{\widetilde{\Gamma}}(X)$ returns a $k$-row sparse matrix, whose nonzero rows correspond to the $k$ rows of $X$ with largest $\ell_2$ row norms.
\begin{algorithm}
\caption{MMV Stochastic Iterative Hard Thresholding (MStoIHT)}\label{alg:MStoIHT}
\begin{algorithmic}
\State\textbf{Input:} $k,\gamma,\eta,p(i),\varepsilon$.
\State\textbf{Output:} $\widehat{X}=X^t$.
\State\textbf{Initialize:} $X^0=\mathbf{0}$.
\For {$t=0,1,2,\ldots,T$}
\State Randomly select an index $i_t\in\{1,2,\ldots,M\}$ with probability $p(i_t)$
\State Proxy: $B^t=X^t-\frac{\gamma}{Mp(i_t)}\frac{\partial f_{i_t}(X^t)}{\partial X}$
\State Identify: $\Gamma^t=\mathrm{approx}_k^r(B^t,\eta)$.
\State Estimate: $X^{t+1}=\mathcal{P}_{\Gamma^t}(B^t)$.
\State If the stopping criteria are met, exit.
\EndFor
\end{algorithmic}
\end{algorithm}

Next, we propose the MMV Stochastic Gradient Matching Pursuit (MStoGradMP) detailed in Algorithm \ref{alg:MStoGradMP}, where the two gradient matching steps involve the operator $\mathrm{approx}_{k}^r(\cdot,\cdot)$. The stopping criteria in all proposed algorithms can be set as the same as those in Algorithm~\ref{alg:StoIHT} and Algorithm~\ref{alg:StoGradMP}. Here we choose the stopping criterion: ${\big\|X^{t+1}-X^t\big\|_F}/{\norm{X^t}_F}<\varepsilon$, where $\varepsilon>0$ is a small tolerance.
\begin{algorithm}
\caption{MMV Stochastic Gradient Matching Pursuit (MStoGradMP)}\label{alg:MStoGradMP}
\begin{algorithmic}
\State\textbf{Input:} $k,\eta_1,\eta_2,p(i),\varepsilon$.
\State\textbf{Output:} $\widehat{X}=X^{t}$.
\State\textbf{Initialize:} $X^0=\mathbf{0}$, $\Lambda=\emptyset$.
\For {$t=0,1,2,\ldots,T$}
\State Randomly select an index $i_t\in\{1,2,\ldots,M\}$ with probability $p(i_t)$
\State Calculate the generalized gradient $R^t=\frac{\partial f_{i_t}(X^t)}{\partial X}$
\State $\Gamma=\mathrm{approx}_{2k}^r(R^t,\eta_1)$
\State $\widehat{\Gamma}=\Gamma\cup\Lambda$
\State $B^t=\argmin_X F(X),\quad X\in\mathcal{R}(\mathcal{D}_{\widehat{\Gamma}})$
\State $\Lambda=\mathrm{approx}_k^r(B^t,\eta_2)$
\State $X^{t+1}=\mathcal{P}_{\Lambda}(B^t)$
\State If the stopping criteria are met, exit.
\EndFor
\end{algorithmic}
\end{algorithm}

\section{Batched Acceleration}\label{sec:batch}
To accelerate computations and improve performance, we apply the mini-batching technique to obtain batched variants of Algorithms \ref{alg:MStoIHT} and \ref{alg:MStoGradMP}. We first partition the index set $\{1,2,\ldots,M\}$ into a collection of equal-sized batches $\tau_1,\ldots,\tau_d$ where the batch size $|\tau_i|=b$ for all $i=1,2,\ldots,\lceil M/b\rceil:=d$. For simplicity, we assume that $M/b$ is an integer. Similar to the approach in \cite{needell2016batched}, we reformulate \eqref{eqn:F} as
\begin{equation}\label{eqn:bmodel}
F(X)=\frac1{d}\sum_{i=1}^d\left(\frac1b\sum_{j\in\tau_i}f_j(X)\right):=\frac1d\sum_{i=1}^d\widehat{f}_i(X).
\end{equation}
That is, $\widehat{f}_{i}$ is the average of the $i$-th batch of component functions $\{f_j\}_{j\in\tau_i}$. Based on this new formulation, we get the batched version of Algorithm \ref{alg:MStoIHT}, which is termed as BMStoIHT, described in Algorithm \ref{alg:BMStoIHT}. Here the input probability $p(i)$ satisfies
\[
\frac1{d}\sum_{i=1}^dp(i)=1\quad\mbox{and}\quad p(i)\geq 0,\,i=1,\ldots,d.
\]
Likewise, we get a batched version of MStoGradMP, termed as BMStoGradMP, which is detailed in Algorithm \ref{alg:BMStoGradMP}.
Note that $\hat{f}_{\tau_t}$ in Algorithms \ref{alg:BMStoIHT} and \ref{alg:BMStoGradMP} is the $\tau_t$-th component function defined in \eqref{eqn:bmodel}, and $\frac{\partial \hat{f}_{\tau_t}(X^t)}{\partial X}$ is the derivative of $\hat{f}_{\tau_t}$ defined in \eqref{eqn:df}.
It is empirically shown in Section \ref{sec:exp} that the increase of the batch size greatly speeds up the convergence of both algorithms, which is more obvious in BMStoIHT. As a by-product, mini-batching can also improve the recovery accuracy based on our experiments. However, there is a trade-off between the batch size and the performance improvement as mentioned in \cite{needell2016batched}.

\begin{algorithm}
\caption{Batched MMV Stochastic Iterative Hard Thresholding (BMStoIHT)}\label{alg:BMStoIHT}
\begin{algorithmic}
\State\textbf{Input:} $k,\gamma,\eta$, $b$ and $p(i)$.
\State\textbf{Output:} $\widehat{X}=X^t$.
\State\textbf{Initialize:} $X^0=\mathbf{0}$.
\For {$t=0,1,2,\ldots,T$}
\State Randomly select a batch index $\tau_t\subseteq\{1,2,\ldots,d\}$ of size $b$ with probability $p(\tau_t)$
\State Proxy: $B^t=X^t-\frac{\gamma}{dp(\tau_t)}\frac{\partial \widehat{f}_{\tau_t}(X^t)}{\partial X}$
\State Identify $\Gamma^t=\mathrm{approx}_k^r(B^t,\eta)$.
\State Estimate $X^{t+1}=\mathcal{P}_{\Gamma^t}(B^t)$.
\State If the stopping criteria are met, exit.
\EndFor
\end{algorithmic}
\end{algorithm}

\begin{algorithm}
\caption{Batched MMV Stochastic Gradient Matching Pursuit (BMStoGradMP)}\label{alg:BMStoGradMP}
\begin{algorithmic}
\State\textbf{Input:} $k,\eta_1,\eta_2$, $b$ and $p(i)$.
\State\textbf{Output:} $\widehat{X}=X^{t}$.
\State\textbf{Initialize:} $X^0=\mathbf{0}$, $\Lambda=\emptyset$.
\For {$t=0,1,2,\ldots,T$}
\State Randomly select a batch index $\tau_t\subseteq\{1,2,\ldots,d\}$ of size $b$ with probability $p(\tau_t)$
\State Calculate the generalized gradient $R^t=\frac{\partial \widehat{f}_{\tau_t}(X^t)}{\partial X}$
\State $\Gamma=\mathrm{approx}_{2k}^r(R^t,\eta_1)$
\State $\widehat{\Gamma}=\Gamma\cup\Lambda$
\State $B^t=\argmin_X F(X),\quad X\in\mathrm{span}(D_{\widehat{\Gamma}})$
\State $\Lambda=\mathrm{approx}_k^r(B^t,\eta_2)$
\State $X^{t+1}=\mathcal{P}_{\Lambda}(B^t)$
\State If the stopping criteria are met, exit.
\EndFor
\end{algorithmic}
\end{algorithm}

\section{Convergence Analysis}\label{sec:convergence}
In this section, we provide the convergence guarantees for the proposed MStoIHT and MStoGradMP, together with their SMV counterparts, i.e., CStoIHT and CStoGradMP. To simplify the discussion, the result at the $t$-th iteration of CStoIHT/CStoGradMP refers to the result obtained after $t$ inner iterations and $L$ outer iterations of Algorithm~\ref{alg:CStoIHT}/Algorithm~\ref{alg:CStoGradMP}, or equivalently the maximum number of inner iterations is set as $t$. Furthermore, all convergence results can be extended to their batched versions, i.e., BStoIHT and BMStoGradMP. Comparison of contraction coefficients shows that the proposed algorithms have faster convergence under the $\mathcal{D}$-RSC, $\mathcal{D}$-RSS and columnwise separability of the objective function (see Definition~\ref{def:colsep}). Similar to Section~\ref{sec:algo}, we consider the model \eqref{eqn:model} under the following assumptions.

\begin{assump} The objective function $F(X)=\frac1M\sum_{i=1}^Mf_i(X):\mathbb{R}^{n\times L}\to\mathbb{R}$ satisfies
\begin{enumerate}[(a)]
\item $f_i$ is columnwise separable with differentiable and convex functions $g_{i,j}$ that satisfy \eqref{eqn:colsep} and $\mathcal{D}$-RSS in vector form, i.e., there exists $\rho_{k}^+(i,j)>0$ such that
    \begin{equation}\label{eqn:rho_g}
    \norm{\nabla g_{i,j}(\vw)-\nabla g_{i,j}(\widehat{\vw})}_2\leq \rho_{k}^+(i,j)\norm{\vw-\widehat{\vw}}_2
    \end{equation}
    for all vectors $\vw,\widehat{\vw}\in\mathbb{R}^n$ with $|\supp_{\mathcal{D}}(\vw)\cup\supp_{\mathcal{D}}(\widehat{\vw})|\leq k$.

\item $\widehat{F_j}=\sum_ig_{i,j}$ satisfies $\mathcal{D}$-RSC in vector form, i.e., there exists $\rho_{k,j}^->0$ such that
    \begin{equation}\label{eqn:rho_Fj}
    \widehat{F_j}(\widehat{\vw})-\widehat{F_j}(\vw)-\langle \nabla F_j(\vw),\widehat{\vw}-\vw\rangle \geq \frac{\rho_{k,j}^-}2\norm{\widehat{\vw}-\vw}_2^2
    \end{equation}
    for all vectors $\vw,\widehat{\vw}\in\mathbb{R}^n$ with $|\supp_{\mathcal{D}}(\vw)\cup\supp_{\mathcal{D}}(\widehat{\vw})|\leq k$.
\end{enumerate}
\end{assump}

Similar to \cite{nguyen2014linear}, the $\mathcal{D}$-RSS property (a) and $\mathcal{D}$-RSC property (b) in this assumption can guarantee the uniqueness of the solution to \eqref{eqn:model}.

% Lemma 5.1
\begin{lemma}\label{lem:rss}
If Assumption (a) is satisfied, then $f_i$ satisfies $\mathcal{D}$-RSS with constant $\mu_k^+(i)=\max\limits_{1\leq j\leq L}\rho_k^+(i,j)$ for $i=1,\ldots,M$.
\end{lemma}

% Lemma 5.2
\begin{lemma}\label{lem:rsc}
If Assumption (b) is satisfied, then $F(X)$ satisfies $\mathcal{D}$-RSC with constant $\mu_k^-=\min\limits_{1\leq j\leq L}\rho_{k,j}^-$.
\end{lemma}

The proofs of Lemma~\ref{lem:rss} and Lemma~\ref{lem:rsc} are straightforward, which can be found in the Appendix. Throughout the paper, we denote
\begin{equation}\label{eqn:alpha_k_j}
\begin{aligned}
\alpha_{k,j}&=\max\limits_{1\leq i\leq M}\tfrac{\rho_k^+(i,j)}{Mp(i)},\\
\rho_{k,j}^+&=\max\limits_{1\leq i\leq M}\rho_k^+(i,j),\\
\bar{\rho}_{k,j}^+&=\tfrac1M\sum\limits_{i=1}^M\rho_k^+(i,j),
\end{aligned}
\end{equation}
where $\rho_{k}^+(i,j)$ is the $\mathcal{D}$-RSS constant for $g_{i,j}$ defined in Assumption (a).

\subsection{MStoIHT}
By replacing the $\ell_2$-norm and vector inner product with the Frobenius norm and the matrix inner product respectively and using the properties of inner product in Section~\ref{sec:pre}, we get similar convergence results for MStoIHT in \cite{nguyen2014linear} in the following theorem. Here we skip the proof which is a straightforward matrix extension of \cite[Appendix B]{nguyen2014linear}.

% Theorem 5.3
\begin{theorem}[MStoIHT]\label{thm1}
Let $X^*$ be a feasible solution of \eqref{eqn:model} and $X^0$ be the initial solution. Assume that $F(X)=\frac1M\sum_{i=1}^Mf_i(X)$ satisfies the $\cD$-RSC with constant $\rho_k^-$ in \eqref{eqn:DRSC} and $f_i$ satisfies $\cD$-RSS with constant $\rho_k^+(i)$ in \eqref{eqn:DRSS}. Then at the $(t+1)$-th iteration, the expectation of the recovery error of Algorithm \ref{alg:MStoIHT} is bounded by
\begin{equation}\label{eqn:alg1E}
E\norm{X^{t+1}-X^*}_F\leq \kappa^{t+1}\norm{X^0-X^*}_F+\frac{\sigma_{X^*}}{1-\kappa},
\end{equation}
where the contraction coefficient $\kappa$ and the tolerance parameter $\sigma_{X^*}$ are
\begin{equation}\label{eqn:kappa_MStoIHT}
\begin{aligned}
\kappa&=2\sqrt{1-\gamma\left(2-\gamma{\alpha}_{3k}\right)\rho_{3k}^-}+\sqrt{(\eta^2-1)\left(1+\gamma^2{\alpha}_{3k}\sum_{i=1}^M\rho_{3k}^+(i)-2\gamma\rho_{3k}^-\right)},\\
\sigma_{X^*}&=\frac{\gamma}{\min\limits_{1\leq i\leq M}Mp(i)}\left(2E_i\max_{|\Omega|\leq 3k}\norm{\mathcal{P}_\Omega\frac{\partial f_i}{\partial X}(X^*)}_F+\sqrt{\eta^2-1}E_i\norm{\frac{\partial f_i}{\partial X}(X^*)}_F\right).
\end{aligned}
\end{equation}
Here $\mathcal{P}_\Omega$ is defined in \eqref{eqn:proj} and
\begin{equation}\label{eqn:rho+}
{\alpha}_{k}=\max_{1\leq i\leq M}\frac{\rho_k^+(i)}{Mp(i)},\,
\rho_k^+=\max_{1\leq i\leq M}\rho_k^+(i),\,\bar{\rho}_k^+=\frac1M\sum_{i=1}^M\rho_k^+(i).
\end{equation}
Thus Algorithm \ref{alg:BMStoIHT} converges linearly if $\kappa<1$. In particular, if $\eta=\gamma=1$, then
\begin{equation}\label{eqn:kappa_MStoIHT_CS}
\kappa=2\sqrt{1-2\rho_{3k}^-+\alpha_{3k}\rho_{3k}^-}.
\end{equation}
\end{theorem}

\noindent\textbf{Remark.} From \eqref{eqn:kappa_MStoIHT}, one can see that the tolerance parameter $\sigma_{X^*}$ increases as the step size $\gamma$ grows which implies that the step size cannot be too large. The contraction coefficient $\kappa$ which controls the convergence speed depends on the $\mathcal{D}$-RSC and $\mathcal{D}$-RSS constants. In addition, in the simplest case when $\eta=\gamma=1$ and $p(i)$ is a uniform distribution, one can see that $\kappa$ decreases if either the $\mathcal{D}$-RSC constant $\rho_{3k}^-$ increases or the maximum of all $\mathcal{D}$-RSS constants $\rho_{3k}^+(i)$ decreases according to \eqref{eqn:kappa_MStoIHT_CS}. That implies that the proposed algorithms will converge fast when each component function of the objective has either a small $\mathcal{D}$-RSS constant or a large $\mathcal{D}$-RSC constant in this case.

\subsection{CStoIHT}
% Discuss the convergence of concatenated StoIHT
To solve the problem \eqref{eqn:model}, CStoIHT uses StoIHT to reconstruct each column of $X$ separately and then concatenate all column vectors to form a matrix. To analyze the convergence of CStoIHT, we first derive an upper bound for $E\norm{X_{\cdot,j}-X_{\cdot,j}^*}_2^2$ following the proof outline in \cite[Appendix B]{nguyen2014linear}. Note that in our proof we look for the contraction coefficient of $E\norm{X_{\cdot,j}-X_{\cdot,j}^*}_2^2$ rather than $E\norm{X_{\cdot,j}-X_{\cdot,j}^*}_2$ as in \cite{nguyen2014linear}.

% Lemma 5.4
\begin{lemma}\label{lem:E2StoIHT}
Let $X^*$ be a feasible solution of \eqref{eqn:model} and $X^0$ be the initial solution. Under the Assumptions, there exist $\kappa_j,\sigma_j>0$ such that the expectation of the recovery error squared at the $t$-th iteration of Algorithm~\ref{alg:CStoIHT} for estimating the $j$-th column of $X^*$ is bounded by
\begin{equation}\label{eqn:E2StoIHT}
E_{I_t}\norm{X_{\cdot,j}^{t+1}-X_{\cdot,j}^*}_2^2\leq \kappa_j^{t+1}\norm{X_{\cdot,j}^0-X_{\cdot,j}^*}_2^2+\frac{\sigma_j}{1-\kappa_j},
\end{equation}
where $X_{\cdot,j}^t$ is the approximation of $X_{\cdot,j}^*$ at the $t$-th iteration of StoIHT with the initial guess $X_{\cdot,j}^0$, i.e., the result at the $t$-th inner iteration and $j$-th outer iteration of Algorithm~\ref{alg:CStoIHT} with the initial guess $X^0$. Here $I_t$ is the set of all indices $i_1,\ldots,i_t$ randomly selected at or before the $t$-th step of the algorithm.
\end{lemma}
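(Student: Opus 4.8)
The plan is to recognize that, under Assumption~1, the $j$-th outer iteration of Algorithm~\ref{alg:CStoIHT} is precisely StoIHT (Algorithm~\ref{alg:StoIHT}) run on the SMV problem $\min_{\vx\in\mathbb R^n}\frac1M\sum_{i=1}^M g_{i,j}(\vx)$ subject to $\norm{\vx}_{0,\mathcal D}\le k$, with feasible point $X_{\cdot,j}^*$; hence the claimed bound is the StoIHT recovery estimate of \cite[Section~8.2]{nguyen2014linear}, with the single difference that we keep the \emph{squared} error throughout instead of passing to $E\norm{\cdot}_2\le(E\norm{\cdot}_2^2)^{1/2}$ by Jensen's inequality at the end. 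First I would verify that the hypotheses descend to each column: writing $F(X)=\sum_{j=1}^L G_j(X_{\cdot,j})$ with $G_j=\frac1M\sum_{i=1}^M g_{i,j}$, evaluating the $\mathcal D$-RSC inequality \eqref{eqn:DRSC} at pairs $X,X'$ that differ only in their $j$-th column shows $G_j$ is $\mathcal D$-RSC with constant $\rho_k^-$, and likewise each $g_{i,j}$ is $\mathcal D$-RSS (Definition~\ref{def:DRSS}) with the constant $\rho_k^+(i)$ of $f_i$ (and, being convex, co-coercive in the restricted sense); the probabilities $p(i)$ and the step size $\gamma$ are unchanged. This produces per-column quantities $\rho^{-}_{3k,j},\alpha_{3k,j},\bar\rho^{+}_{3k,j}$ that play the role of $\rho_{3k}^-,\alpha_{3k},\bar\rho_{3k}^+$ in \eqref{eqn:kappa_MStoIHT}.

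Next I would carry out the one-step analysis. Fix $t$ and put $\Omega=\supp_{\mathcal D}(X_{\cdot,j}^t)\cup\supp_{\mathcal D}(X_{\cdot,j}^{t+1})\cup\supp_{\mathcal D}(X_{\cdot,j}^*)$, so $|\Omega|\le 3k$, and write $\vb^t=X_{\cdot,j}^t-\tfrac{\gamma}{Mp(i_t)}\nabla g_{i_t,j}(X_{\cdot,j}^t)$. The ``Identify''/``Estimate'' steps together with the defining property of $\mathrm{approx}_k(\cdot,\eta)$ and of $\mathcal P_{\Gamma^t}$ give a deterministic estimate of the form $\norm{X_{\cdot,j}^{t+1}-X_{\cdot,j}^*}_2\le c(\eta)\norm{\mathcal P_\Omega(\vb^t-X_{\cdot,j}^*)}_2$, the $\sqrt{\eta^2-1}$ contribution being peeled off with $2ab\le a^2+b^2$. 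Since $X_{\cdot,j}^t$ and $X_{\cdot,j}^*$ are supported in $\Omega$, I would square and expand
\begin{align*}
\norm{\mathcal P_\Omega(\vb^t-X_{\cdot,j}^*)}_2^2
&=\norm{X_{\cdot,j}^t-X_{\cdot,j}^*}_2^2-\tfrac{2\gamma}{Mp(i_t)}\big\langle X_{\cdot,j}^t-X_{\cdot,j}^*,\mathcal P_\Omega\nabla g_{i_t,j}(X_{\cdot,j}^t)\big\rangle\\
&\quad+\tfrac{\gamma^2}{M^2p(i_t)^2}\norm{\mathcal P_\Omega\nabla g_{i_t,j}(X_{\cdot,j}^t)}_2^2 ,
\end{align*}
and then take the conditional expectation $E_{i_t}[\,\cdot\mid I_{t-1}]$. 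Unbiasedness, $E_{i_t}\big[\tfrac1{Mp(i_t)}\nabla g_{i_t,j}(X_{\cdot,j}^t)\big]=\nabla G_j(X_{\cdot,j}^t)$, turns the middle term into $-2\gamma\langle X_{\cdot,j}^t-X_{\cdot,j}^*,\nabla G_j(X_{\cdot,j}^t)\rangle$; combining this with the last term through the $\mathcal D$-RSC of $G_j$ and the co-coercivity estimate (from $\mathcal D$-RSS and convexity of the $g_{i,j}$) — after splitting $\nabla g_{i_t,j}(X_{\cdot,j}^t)=(\nabla g_{i_t,j}(X_{\cdot,j}^t)-\nabla g_{i_t,j}(X_{\cdot,j}^*))+\nabla g_{i_t,j}(X_{\cdot,j}^*)$ — contracts the $\norm{X_{\cdot,j}^t-X_{\cdot,j}^*}_2^2$ factor, the residual $\nabla g_{i_t,j}(X_{\cdot,j}^*)$ being the source of the additive error $\sigma_j$.

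Assembling these estimates gives a one-step bound $E_{i_t}\norm{X_{\cdot,j}^{t+1}-X_{\cdot,j}^*}_2^2\le \kappa_j\norm{X_{\cdot,j}^t-X_{\cdot,j}^*}_2^2+\sigma_j$, where $\kappa_j$ is built from $c(\eta),\gamma,\rho^{-}_{3k,j},\alpha_{3k,j},\bar\rho^{+}_{3k,j}$ (for $\eta=\gamma=1$ it reduces to the square of the column-$j$ analogue of \eqref{eqn:kappa_MStoIHT_CS}) and $\sigma_j\propto\gamma^2\max_{1\le i\le M}\frac{\norm{\mathcal P_\Omega\nabla g_{i,j}(X_{\cdot,j}^*)}_2^2}{Mp(i)}$. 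Taking the total expectation over $I_t$ and unrolling the recursion yields \eqref{eqn:E2StoIHT} with geometric-series constant $\sigma_j/(1-\kappa_j)$, valid once $\gamma$ and $\eta$ lie in the range making $\kappa_j<1$. I expect the only real obstacle to be bookkeeping in the $\eta>1$ case — arranging the split of the deterministic bound so that the $\sqrt{\eta^2-1}$ term lands in $\sigma_j$ (or in a controlled part of $\kappa_j$) rather than spoiling the leading contraction factor, and checking that after the RSC/RSS substitutions the coefficient multiplying $\norm{X_{\cdot,j}^t-X_{\cdot,j}^*}_2^2$ is genuinely the claimed $\kappa_j$; everything else is a line-by-line transcription of the vector argument in \cite{nguyen2014linear} with $\ell_2$-quantities in place of the matrix ones.
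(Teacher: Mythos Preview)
Your plan is on the right track and matches the paper's strategy at the structural level: reduce the $j$-th outer loop to StoIHT on $\frac1M\sum_i g_{i,j}$, establish a one--step contraction for the \emph{squared} error, and unroll. The paper's execution of the one-step bound, however, is different from your direct expansion. It does not prove a deterministic estimate of the form $\norm{X_{\cdot,j}^{t+1}-X_{\cdot,j}^*}_2\le c(\eta)\norm{\mathcal P_\Omega(\vb^t-X_{\cdot,j}^*)}_2$; instead it imports from \cite[Section~8.2]{nguyen2014linear} the quadratic inequality $x^2-2ux-v\le0$ (with $x=\norm{X_{\cdot,j}^{t+1}-X_{\cdot,j}^*}_2$, $u=u_1+u_2$ the projected gradient-step term split at $\nabla g_{i_t,j}(X_{\cdot,j}^*)$, and $v$ carrying the \emph{non-projected} $(\eta^2-1)$ piece), solves it as $x\le u+\sqrt{u^2+v}$, and then squares crudely via $(a+b)^2\le 2a^2+2b^2$ twice to get $x^2\le 8u_1^2+8u_2^2+4v_1+4v_2$. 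Taking $E_{i_t\mid I_{t-1}}$ and invoking the column-wise RSC/RSS constants then yields
\[
\kappa_j=8\bigl(1-(2\gamma-\gamma^2\alpha_{3k,j})\rho_{3k,j}^-\bigr)+4(\eta^2-1)\bigl(1+\gamma^2\alpha_{3k,j}\bar\rho_{3k,j}^+-2\gamma\rho_{3k,j}^-\bigr),
\]
with $\sigma_j$ built from $E_{i_t}\norm{\mathcal P_\Omega\nabla g_{i_t,j}(X_{\cdot,j}^*)}_2^2$ and $E_{i_t}\norm{\nabla g_{i_t,j}(X_{\cdot,j}^*)}_2^2$ (expectations, not a max as you wrote). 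In particular, for $\eta=\gamma=1$ the paper gets $\kappa_j=8(1-2\rho_{3k,j}^-+\alpha_{3k,j}\rho_{3k,j}^-)$, a factor of $2$ looser than the ``square of \eqref{eqn:kappa_MStoIHT_CS}'' that you anticipate; this loss is precisely the $(a+b)^2\le2a^2+2b^2$ step and is what later gives $\widehat\kappa=\sqrt{2}\,\kappa$ in Theorem~\ref{thm:SMVStoIHT}.

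One caveat on your sketch: for $\eta>1$ the deterministic bound you state cannot hold with only the $\mathcal P_\Omega$-projected quantity on the right, because the $\mathrm{approx}_k(\cdot,\eta)$ guarantee produces the additional term $v=(\eta^2-1)\norm{\vb^t-X_{\cdot,j}^*}_2^2$ involving the full (unprojected) gradient step; that is exactly why the paper keeps $v$ separate and why your proposed expansion of $\norm{\mathcal P_\Omega(\vb^t-X_{\cdot,j}^*)}_2^2$ alone would miss it. Your cleaner route works for $\eta=1$ (and indeed gives the sharper constant), but for the general statement you will need to carry the $v$-term as the paper does.
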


% Proof of Lemma 5.4
\begin{proof}
Due to the separable form of $f_i$ in \eqref{eqn:colsep}, we consider $L$ problems of the form
\begin{equation}\label{eqn:SMVobj}
\min_{\vw}\frac1M\sum_{i=1}^Mg_{i,j}(\vw),\quad \norm{\vw}_{0,\mathcal{D}}\leq k,\quad j=1,\ldots,L,
\end{equation}
where $g_{i,j}$ are given in \eqref{eqn:colsep}. This relaxation is valid since $X^*_{\cdot,j}$ is also a feasible solution of \eqref{eqn:SMVobj} if $X^*$ is a feasible solution of \eqref{eqn:model}.
Let $\vw^t=X_{\cdot,j}^t$, $\vw^*=X_{\cdot,j}^*$, $x=\norm{\vw^{t+1}-\vw^*}_2$,
\[\begin{aligned}
u&=\norm{\vw^t-\vw^*-\frac{\gamma}{Mp(i_t)}\mathcal{P}_\Omega(\nabla g_{i_t,j}(\vw^t)-\nabla g_{i_t,j}(\vw^*))}_2+\norm{\frac{\gamma}{Mp(i_t)}\mathcal{P}_\Omega\nabla g_{i_t,j}(\vw^*)}_2\\
&:=u_1+u_2,
\end{aligned}\]
and
\[
v=(\eta^2-1)\norm{\vw^t-\vw^*-\frac{\gamma}{Mp(i_t)}\nabla g_{i_t,j}(\vw^t)}_2^2.
\]
Let
\[\begin{aligned}
v_1&=(\eta^2-1)\norm{\vw^t-\vw^*-\frac{\gamma}{Mp(i_t)}(\nabla g_{i_t,j}(\vw^t)-\nabla g_{i_t,j}(\vw^*))}_2^2,\\
v_2&=(\eta^2-1)\norm{\frac{\gamma}{Mp(i_t)}\nabla g_{i_t,j}(\vw^*)}_2^2.
\end{aligned}\]
The inequality $(a+b)^2\leq 2a^2+2b^2$ yields that
$v\leq 2v_1+2v_2$.
Similar to \cite[Appendix B]{nguyen2014linear}, we can show that solving $x^2-2ux-v\leq 0$ leads to $x\leq u+\sqrt{u^2+v}$.
Thus the inequality $a+b\leq \sqrt{2a^2+2b^2}$ yields
\[
x^2\leq 2u^2+2(u^2+v)=4u^2+2v\leq 8(u_1^2+u_2^2)+4v_1+4v_2.
\]
By taking the conditional expectation $E_{i_t|I_{t-1}}$ on both sides, we obtain
\[\begin{aligned}
&E_{i_t|I_{t-1}}x^2\leq 8E_{i_t|I_{t-1}}u_1^2+8E_{i_t|I_{t-1}}u_2^2+4E_{i_t|I_{t-1}}(v_1+v_2)\\
&\leq 8(1-(2\gamma-\gamma^2\alpha_{3k,j})\rho_{3k,j}^-)\norm{\vw^t-\vw^*}_2^2
+\frac{8\gamma^2}{\min_{i_t}M^2(p(i_t))^2}E_{i_t|I_{t-1}}\norm{\mathcal{P}_\Omega\nabla g_{i_t,j}(\vw^*)}_2^2\\
&+4(\eta^2-1)(1+\gamma^2\alpha_{3k,j}\bar{\rho}_{3k,j}^+-2\gamma\rho_{3k,j}^-)\norm{\vw^t-\vw^*}_2^2\\
&+\frac{4\gamma^2(\eta^2-1)}{\min_{i_t}M^2(p(i_t))^2}E_{i_t}\norm{\nabla g_{i_t,j}(\vw^*)}_2^2\\
&=\left(8(1-(2\gamma-\gamma^2\alpha_{3k,j})\rho_{3k,j}^-)+4(\eta^2-1)(1+\gamma^2\alpha_{3k,j}\bar{\rho}_{3k,j}^+-2\gamma\rho_{3k,j}^-)\right)\norm{\vw^t-\vw^*}_2^2\\
&+\frac{4\gamma^2}{\min_{i_t}M^2(p(i_t))^2}\left(2E_{i_t}\norm{\mathcal{P}_\Omega\nabla g_{i_t,j}(\vw^*)}_2^2+(\eta^2-1)E_{i_t}\norm{\nabla g_{i_t,j}(\vw^*)}_2^2\right)\\
&:=\kappa_j\norm{\vw^t-\vw^*}_2^2+\sigma_j.
\end{aligned}\]
In the second inequality, the respective upper bounds for $E_{i_t|I_{t-1}}u_1^2$ and $E_{i_t|I_{t-1}}u_2^2$ are obtained by using the Corollary 8 of \cite{nguyen2014linear}. Here $\rho_{k,j}^-$ is defined in \eqref{eqn:rho_Fj}, $\alpha_{k,j}$ and $\bar{\rho}_{k,j}^+$ are defined in \eqref{eqn:alpha_k_j}.
Therefore
\[
E_{I_t}\norm{\vw^{t+1}-\vw^*}_2^2\leq\kappa_jE_{I_{t-1}}\norm{\vw^t-\vw^*}_2^2+\sigma_j.
\]
which implies that
\[
E_{I_t}\norm{X_{\cdot,j}^{t+1}-X_{\cdot,j}^*}_2^2
\leq \kappa_j^{t+1}\norm{X_{\cdot,j}^0-X_{\cdot,j}^*}_2^2+\frac{\sigma_j}{1-\kappa_j},
\]
for $j=1,\ldots,L$.
Here the contraction coefficient is
\begin{equation}\label{eqn:kappaj}
\begin{aligned}
\kappa_j&=8(1-(2\gamma-\gamma^2\alpha_{3k,j})\rho_{3k,j}^-)+4(\eta^2-1)(1+\gamma^2\alpha_{3k,j}\bar{\rho}_{3k,j}^+-2\gamma\rho_{3k,j}^-),
\end{aligned}
\end{equation}
and the tolerance parameter is
\begin{equation}\label{eqn:sigmaj}
\begin{aligned}
\sigma_j&=\frac{4\gamma^2}{\min\limits_{1\leq i\leq M}M^2(p(i_t))^2}\left(2E_{i_t}\norm{\mathcal{P}_\Omega\nabla g_{i_t,j}(\vw^*)}_2^2+(\eta^2-1)E_{i_t}\norm{\nabla g_{i_t,j}(\vw^*)}_2^2\right).
\end{aligned}
\end{equation}
In particular, if $\gamma=\eta=1$ and $p(i)=1/M$, then
\[\begin{aligned}
\kappa_j&=8(1-2\rho_{3k,j}^{-}+\alpha_{3k,j}\rho_{3k,j}^-),\\
\sigma_j&=\frac8M\sum_{i=1}^M\norm{\mathcal{P}_\Omega\nabla g_{i,j}(X_{\cdot,j}^*)}_2^2.
\end{aligned}
\]
\end{proof}

% Theorem 5.5
\begin{theorem}[CStoIHT]\label{thm:SMVStoIHT}
Let $X^*$ be a feasible solution of \eqref{eqn:model} and $X^0$ be the initial solution. Under Assumptions, the expectation of the recovery error at the $(t+1)$-th iteration of Algorithm~\ref{alg:CStoIHT} is bounded by
\begin{equation}\label{eqn:SMVStoIHT_E}
E\norm{X^{t+1}-X^*}_F\leq \widehat{\kappa}^{t+1}\norm{X^0-X^*}_F+\widehat{\sigma},
\end{equation}
where $X^{t}$ is the approximation of $X^*$ at the $t$-th iteration of Algorithm~\ref{alg:CStoIHT} with the initial guess $X^0_{\cdot,j}$. Here the contraction coefficient $\widehat{\kappa}$ and the tolerance parameter $\widehat{\sigma}$ are defined as
\[
\widehat{\kappa}=\sqrt{\max_{1\leq j\leq L}\kappa_j},\quad
\widehat{\sigma} = \sqrt{\frac{\sum_{j=1}^L\sigma_j}{1-\max\limits_{1\leq j\leq L}{\kappa_j}}},
\]
where $\kappa_j$ is the contraction coefficient for each StoIHT defined in \eqref{eqn:kappaj}.
\end{theorem}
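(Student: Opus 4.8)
The plan is to aggregate the per-column estimate of Lemma~\ref{lem:E2StoIHT} over all $L$ columns and then pass from the squared Frobenius error to the Frobenius error by Jensen's inequality. Recall that CStoIHT reconstructs the $j$-th column $X^*_{\cdot,j}$ by running StoIHT on the SMV problem \eqref{eqn:SMVobj}, so Lemma~\ref{lem:E2StoIHT} applies to each $j$ with its own contraction coefficient $\kappa_j$ from \eqref{eqn:kappaj} and tolerance $\sigma_j$ from \eqref{eqn:sigmaj}. Throughout I would assume $0\le\kappa_j<1$ for every $j$, which is exactly the regime in which \eqref{eqn:E2StoIHT} is a genuine contraction and in which $\widehat{\kappa}<1$ and $\delta$ are well defined.

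First I would use the identity $\norm{X}_F^2=\sum_{j=1}^L\norm{X_{\cdot,j}}_2^2$ to write
\[
E\norm{X^{t+1}-X^*}_F^2=\sum_{j=1}^L E\norm{X^{t+1}_{\cdot,j}-X^*_{\cdot,j}}_2^2,
\]
and bound each summand by the right-hand side of \eqref{eqn:E2StoIHT}. Since $\kappa_j\ge0$, monotonicity of $x\mapsto x^{t+1}$ on $[0,\infty)$ gives $\kappa_j^{t+1}\le(\max_{1\le j\le L}\kappa_j)^{t+1}$, so the initialization part of the sum is at most $(\max_j\kappa_j)^{t+1}\sum_j\norm{X^0_{\cdot,j}-X^*_{\cdot,j}}_2^2=(\max_j\kappa_j)^{t+1}\norm{X^0-X^*}_F^2$. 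For the noise-floor part, $\frac1{1-\kappa_j}\le\frac1{1-\max_j\kappa_j}$ yields $\sum_j\frac{\sigma_j}{1-\kappa_j}\le\frac{\sum_j\sigma_j}{1-\max_j\kappa_j}=\delta^2$. Combining,
\[
E\norm{X^{t+1}-X^*}_F^2\le\big(\max_{1\le j\le L}\kappa_j\big)^{t+1}\norm{X^0-X^*}_F^2+\delta^2.
\]

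Finally I would convert this bound on the second moment into one on the first moment: by Jensen's inequality (concavity of $\sqrt{\cdot}$), $E\norm{X^{t+1}-X^*}_F\le\sqrt{E\norm{X^{t+1}-X^*}_F^2}$, and then the elementary inequality $\sqrt{a+b}\le\sqrt a+\sqrt b$ for $a,b\ge0$ gives
\[
E\norm{X^{t+1}-X^*}_F\le\big(\sqrt{\max_j\kappa_j}\big)^{t+1}\norm{X^0-X^*}_F+\delta=\widehat{\kappa}^{t+1}\norm{X^0-X^*}_F+\delta,
\]
which is \eqref{eqn:SMVStoIHT_E}. There is no serious obstacle here — the statement is essentially bookkeeping on top of Lemma~\ref{lem:E2StoIHT} — but the one point needing care is precisely this passage from squared to unsquared error: one cannot take square roots termwise inside the expectation, and it is Jensen together with sub-additivity of $\sqrt{\cdot}$ that lets the two terms split cleanly, at the (harmless) cost of replacing $\max_j\kappa_j$ by $\widehat{\kappa}=\sqrt{\max_j\kappa_j}$ in the rate. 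I would also flag explicitly that the bound is informative only when $\widehat{\kappa}<1$, i.e.\ when every column-wise StoIHT contracts.
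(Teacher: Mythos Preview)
Your proposal is correct and follows essentially the same route as the paper: apply Lemma~\ref{lem:E2StoIHT} columnwise, sum to get a bound on $E\norm{X^{t+1}-X^*}_F^2$ via $\kappa_j\le\max_j\kappa_j$, then pass to $E\norm{X^{t+1}-X^*}_F$ using Jensen's inequality and the subadditivity $\sqrt{a+b}\le\sqrt a+\sqrt b$. The paper's proof interleaves these steps in a slightly different order (it takes the square root first and carries the sum inside), but the argument and the ingredients are identical.
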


\begin{proof}
For each $j=1,2,\ldots,L$, StoIHT with the initial guess $X_{\cdot,j}^0$ generates $X^{t+1}_{\cdot,j}$ after $t$ iterations, i.e., the result at the $(t+1)$-th inner iteration and $j$-th outer iteration of Algorithm~\ref{alg:CStoIHT}. Then the expectation of the recovery error squared is bounded by
\[
E\norm{X_{\cdot,j}^{t+1}-X_{\cdot,j}^*}_2^2\leq \kappa_j^{t+1}\norm{X_{\cdot,j}^0-X_{\cdot,j}^*}_2^2+\frac{\sigma_j}{1-\kappa_j},
\]
where $\kappa_j$ and $\sigma_j$ are defined in Lemma~\ref{lem:E2StoIHT}. Note that $\kappa_j$ depends only on the constants in the $\mathcal{D}$-RSC and $\mathcal{D}$-RSS properties of the objective function or its component function $g_{i,j}$ while $\sigma_j$ depends on the feasible solution $X_{\cdot,j}^*$. By combining all $L$ components of $X^{t+1}$, we get
\[
\begin{aligned}
&E\norm{X^{t+1}-X^*}_F\leq\sqrt{E\norm{X^{t+1}-X^*}_F^2}\\
&= \sqrt{\sum_{j=1}^LE\norm{X^{t+1}_{\cdot,j}-X^*_{\cdot,j}}_2^2}\\
&\leq \sqrt{\sum_{j=1}^L\left(\kappa_j^{t+1}\norm{X_{\cdot,j}^0-X_{\cdot,j}^*}_2^2+\frac{\sigma_j}{1-\kappa_j}\right)}\\
&\leq \sqrt{(\max_{1\leq j\leq L}\kappa_j)^{t+1}\norm{X^0-X^*}_F^2+\frac{\sum_{j=1}^L\sigma_j}{1-\max_{1\leq j\leq L}\kappa_j}}\\
&\leq \left(\sqrt{\max\limits_{1\leq j\leq L}\kappa_j}\right)^{t+1}\norm{X^0-X^*}_F+\sqrt{\frac{\sum_{j=1}^L\sigma_j}{1-\max\limits_{1\leq j\leq L}\kappa_j}}\\
&:= \widehat{\kappa}^{t+1}\norm{X^0-X^*}_F+\widehat{\sigma}.
\end{aligned}
\]
In particular, if $\gamma=\eta=1$, then
\[\begin{aligned}
\widehat{\kappa}
&=\sqrt{\max\limits_{1\leq j\leq L}\kappa_j}
=\sqrt{\max\limits_{1\leq j\leq L}8(1-2\rho_{3k,j}^{-}+\alpha_{3k,j}\rho_{3k,j}^-)}\\
&=2\sqrt{2}\sqrt{\max\limits_{1\leq j\leq L}(1-2\rho_{3k,j}^{-}+\alpha_{3k,j}\rho_{3k,j}^-)}.
\end{aligned}
\]
\end{proof}
\noindent\textbf{Remark.} Firstly, MStoIHT can converge without the Assumptions while CStoIHT may not converge to a joint sparse solution without Assumptions, since the joint sparsity is not enforced at each iteration of CStoIHT. Secondly, under Assumptions, if $\rho_{3k,1}^-=\ldots=\rho_{3k,L}^-$ and $\rho_{3k,1}^+=\ldots=\rho_{3k,L}^+$, Lemmas~\ref{lem:rss} and \ref{lem:rsc} yield
$\widehat{\kappa}=\sqrt{2}\kappa$, where $\kappa$ is defined Theorem~\ref{thm1}.

\subsection{MStoGradMP}
% convergence for MStoGradMP
% Theorem 5.6
By using the same proof techniques as in Theorem~\ref{thm1}, we can get the following convergence result for MStoGradMP.
\begin{theorem}[MStoGradMP]\label{thm2}
Let $X^*$ be a feasible solution of \eqref{eqn:model} and $X^0$ be the initial solution. Assume that $F(X)=\frac1M\sum_{i=1}^Mf_i(X)$ satisfies the $\cD$-RSC with constant $\rho_k^-$ in \eqref{eqn:DRSC} and $f_i$ satisfies $\cD$-RSS with constant $\rho_k^+(i)$ in \eqref{eqn:DRSS}. At the $(t+1)$-th iteration of Algorithm \ref{alg:MStoGradMP}, the expectation of the recovery error is bounded by
\begin{equation}\label{eqn:alg2E}
E\norm{X^{t+1}-X^*}_F\leq \kappa^{t+1}\norm{X^0-X^*}_F+\frac{\sigma_{X^*}}{1-\kappa},
\end{equation}
where
\[\begin{aligned}
\kappa&=(1+\eta_2)\sqrt{\frac{\alpha_{4k}}{\rho_{4k}^-}}
\left(\sqrt{\hat{p}}\sqrt{\frac{\rho_{4k}^+(2\eta_1^2-1)}{\rho_{4k}^-\eta_2^2}-1}+\frac{\sqrt{\eta_1^2-1}}{\eta_1}\right),\\
\sigma_{X^*}&=\frac{(1+\eta_2)}{\rho_{4k}^{-}\tilde{p}}\left(2\hat{p}\sqrt{\frac{\alpha_{4k}}{\rho_{4k}^-}}+3\right)\max_{|\Omega|\leq 4k\atop 1\leq i\leq M}\norm{\mathcal{P}_\Omega\frac{\partial f_i}{\partial X}(X^*)}_F.
\end{aligned}\]
Here $\hat{p}=\max\limits_{1\leq i\leq M}Mp(i)$, $\tilde{p}=\min\limits_{1\leq i\leq M}Mp(i)$ and $\alpha_k,\rho_{k}^+,\bar{\rho}_k^+$ are defined in \eqref{eqn:rho+}. Thus Algorithm \ref{alg:MStoGradMP} converges linearly if $\kappa<1$. In particular, if $\eta_1=\eta_2=1$ and $p(i)=1/M$, then
\begin{equation}\label{eqn:kappa_MStoGradMP}
\kappa=\frac{2\sqrt{\alpha_{4k}(\rho_{4k}^+-\rho_{4k}^-)}}{\rho_{4k}^-}.
\end{equation}
\end{theorem}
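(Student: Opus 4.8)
The plan is to transcribe the convergence analysis of StoGradMP from \cite{nguyen2014linear} into the matrix setting, exactly as the proof of Theorem~\ref{thm1} transcribes the StoIHT analysis: every vector in $\mathbb{R}^n$ becomes a matrix in $\mathbb{R}^{n\times L}$, the $\ell_2$-norm becomes the Frobenius norm, the Euclidean inner product becomes $\langle U,V\rangle=\mathrm{Tr}(U^TV)$, and the scalar operator $\mathrm{approx}_k(\cdot,\cdot)$ becomes the row operator $\mathrm{approx}_k^r(\cdot,\cdot)$. The ingredients that make this substitution legitimate are the identity \eqref{eqn:trieqn}, the Cauchy--Schwarz inequality \eqref{eqn:CSineqn}, the fact that the best $k$-row-sparse approximation $(B^t)_k$ of a matrix $B^t$ satisfies $\norm{B^t-(B^t)_k}_F\le\norm{B^t-Z}_F$ for every $k$-row-sparse $Z$, and the $\mathcal{D}$-RSC / $\mathcal{D}$-RSS of $F$ and of the components $f_i$ (Definitions~\ref{def:DRSC} and \ref{def:DRSS}) in place of the vector-space hypotheses of \cite{nguyen2014linear}.

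First I would assemble the per-draw recovery bound from one pass through Algorithm~\ref{alg:MStoGradMP}. The ``Estimate'' step $X^{t+1}=\mathcal{P}_\Lambda(B^t)$, $\Lambda=\mathrm{approx}_k^r(B^t,\eta_2)$, gives, by the triangle inequality and the defining property of $\mathrm{approx}_k^r$ applied columnwise together with the $k$-row-sparsity of $X^*$,
\[
\norm{X^{t+1}-X^*}_F\le\norm{X^{t+1}-B^t}_F+\norm{B^t-X^*}_F\le(1+\eta_2)\norm{B^t-X^*}_F,
\]
the source of the $1+\eta_2$ prefactor in $\kappa$ and $\sigma_{X^*}$. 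Next, since $B^t$ minimizes the full objective $F$ over $\mathcal{R}(\mathcal{D}_{\widehat\Gamma})$, the projection $\mathcal{P}_{\widehat\Gamma}\frac{\partial F}{\partial X}(B^t)$ vanishes; splitting $X^*=\mathcal{P}_{\widehat\Gamma}X^*+\mathcal{P}_{\widehat\Gamma^c}X^*$ and applying $\mathcal{D}$-RSC and $\mathcal{D}$-RSS of $F$ on $\widehat\Gamma\cup\supp_{\mathcal{D}}^r(X^*)$, a row set of size at most $3k+k=4k$ (this is where the subscript $4k$ originates), yields
\[
\norm{B^t-X^*}_F\le\Big(1+\tfrac{\rho_{4k}^+}{\rho_{4k}^-}\Big)\norm{\mathcal{P}_{\widehat\Gamma^c}X^*}_F+\tfrac{1}{\rho_{4k}^-}\norm{\mathcal{P}_{\widehat\Gamma}\tfrac{\partial F}{\partial X}(X^*)}_F.
\]
Finally, because $\supp_{\mathcal{D}}^r(X^t)\subseteq\Lambda\subseteq\widehat\Gamma$, one has $\norm{\mathcal{P}_{\widehat\Gamma^c}X^*}_F=\norm{\mathcal{P}_{(\supp_{\mathcal{D}}^r X^*\cup\supp_{\mathcal{D}}^r X^t)\setminus\Gamma}(X^t-X^*)}_F$, and since $\Gamma=\mathrm{approx}_{2k}^r(R^t,\eta_1)$ picks the $2k$ rows of the stochastic matrix gradient $R^t=\frac{\partial f_{i_t}}{\partial X}(X^t)$ of largest $\ell_2$ row norm, the defining inequality of $\mathrm{approx}_{2k}^r$ trades this residual for row norms of $R^t$ over $\supp_{\mathcal{D}}^r X^*\cup\supp_{\mathcal{D}}^r X^t$, introducing the factors $\sqrt{\eta_1^2-1}/\eta_1$ and $\sqrt{2\eta_1^2-1}$; then $\mathcal{D}$-RSS and $\mathcal{D}$-RSC of the single component $f_{i_t}$ convert those gradient quantities back to $\norm{X^t-X^*}_F$ plus a term in $\norm{\mathcal{P}_\Omega\frac{\partial f_{i_t}}{\partial X}(X^*)}_F$. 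Chaining the three links produces $\norm{X^{t+1}-X^*}_F\le c(i_t)\norm{X^t-X^*}_F+\mathrm{noise}(i_t)$ for one random draw.

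Then I would take the conditional expectation $E_{i_t|I_{t-1}}$ of this bound (in the notation of Lemma~\ref{lem:E2StoIHT}); since $i_t$ is drawn independently of $X^t$ given $I_{t-1}$, the coefficient $c(i_t)$ separates out, and bounding $\rho_{4k}^+(i_t)/(Mp(i_t))$ by $\alpha_{4k}$, $\sqrt{Mp(i_t)}$ by $\max_i\sqrt{Mp(i)}$, and $1/(Mp(i_t))$ by $1/\min_i Mp(i)$ — through the same Cauchy--Schwarz / Jensen manipulation used for StoGradMP in \cite{nguyen2014linear} — gives $E\norm{X^{t+1}-X^*}_F\le\kappa\,E\norm{X^t-X^*}_F+\sigma_{X^*}$ with $\kappa$ and $\sigma_{X^*}$ exactly as stated, where $\max_{|\Omega|\le 4k,\,i}\norm{\mathcal{P}_\Omega\frac{\partial f_i}{\partial X}(X^*)}_F$ absorbs all the gradient-at-$X^*$ terms collected above. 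Unrolling this linear recursion as in the proof of Theorem~\ref{thm1} and summing the geometric series $\sum_{s\ge0}\kappa^s=1/(1-\kappa)$ (valid once the hypotheses force $\kappa<1$) gives \eqref{eqn:alg2E}; the specialization $\eta_1=\eta_2=1$, $p(i)=1/M$ kills the $\sqrt{\eta_1^2-1}/\eta_1$ term, collapses $\alpha_{4k}$ to $\rho_{4k}^+$ and $\max_i\sqrt{Mp(i)}$ to $1$, and a one-line simplification of $2\sqrt{\rho_{4k}^+/\rho_{4k}^-}\sqrt{\rho_{4k}^+/\rho_{4k}^--1}$ produces \eqref{eqn:kappa_MStoGradMP}.

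The step I expect to be the main obstacle is the identification link (the third one above): extracting the \emph{sharp} constant requires invoking simultaneously the $\eta_1$-approximate top-$2k$ row selection, the $\mathcal{D}$-RSS \emph{upper} bound, and the $\mathcal{D}$-RSC \emph{lower} bound for the \emph{same} component $f_{i_t}$ on a $4k$-row-sparse support, and then passing to the expectation so that the per-draw constants $\rho_{4k}^+(i_t)/(Mp(i_t))$ aggregate into $\alpha_{4k}$ and the weight $\max_i\sqrt{Mp(i)}$ emerges without slack. This is the place where the matrix analogues of the scalar inequalities of \cite{nguyen2014linear} must be checked most carefully; everything else is a mechanical Frobenius-norm transcription of the existing vector argument, which is precisely why the result can be advertised as following ``the same proof techniques as in Theorem~\ref{thm1}''.
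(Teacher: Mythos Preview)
Your proposal is correct and matches the paper's approach exactly: the paper itself gives no detailed proof of Theorem~\ref{thm2}, stating only that it follows ``by using the same proof techniques as in Theorem~\ref{thm1}'', i.e.\ by transcribing the StoGradMP analysis of \cite{nguyen2014linear} with the Frobenius norm, the matrix inner product, and $\mathrm{approx}_k^r$ in place of their vector counterparts---precisely the three-link chain (estimate, sub-optimization, identification) plus expectation and unrolling that you outline. One small slip: in the final specialization $\eta_1=\eta_2=1$, $p(i)=1/M$, the displayed formula \eqref{eqn:kappa_MStoGradMP} retains $\alpha_{4k}$ rather than replacing it by $\rho_{4k}^+$, so the simplification is $2\sqrt{\alpha_{4k}/\rho_{4k}^-}\sqrt{\rho_{4k}^+/\rho_{4k}^- -1}=2\sqrt{\alpha_{4k}(\rho_{4k}^+-\rho_{4k}^-)}/\rho_{4k}^-$ directly.
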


\subsection{CStoGradMP}
% convergence for CStoGradMP
Similar to CStoIHT, we start the convergence analysis for CStoGradMP by finding the contraction coefficient for the expectation of recovery error squared at each iteration of CStoGradMP.

% Lemma 5.7
\begin{lemma}\label{lem:E1StoGradMP}
Let $X^*$ be a feasible solution of \eqref{eqn:bmodel} and $X^0$ be the initial solution. Under Assumptions, the expectation of the recovery error squared at the $t$-th iteration of Algorithm~\ref{alg:CStoGradMP} for estimating the $j$-th column of $X^*$ is bounded by
\begin{equation}\label{eqn:E1StoGradMP}
E_{I_t}\norm{\vb^t-X_{\cdot,j}^*}_2^2\leq \beta_1E_{I_t}\norm{\mathcal{P}_{\hat{\Gamma}}(\vb^t-X_{\cdot,j}^*)}_2^2+\xi_1,
\end{equation}
where
\begin{equation}\label{eqn:beta_delta_1}
\begin{aligned}
\beta_1&=\frac{\alpha_{4k,j}}{2\rho_{4k,j}^{-}-\alpha_{4k,j}},\\
\xi_1&=\frac{2E_{I_t}E_{i}\norm{\mathcal{P}_{\widehat{\Gamma}}\nabla g_{i,j}(X_{\cdot,j})}_2^2}{\alpha_{4k,j}(2\rho_{4k,j}^{-}-\alpha_{4k,j})\min\limits_{1\leq i\leq M} M^2(p(i))^2}.
\end{aligned}\end{equation}
Here $\alpha_{k,j}$ and $\rho_{k,j}^-$ are defined in \eqref{eqn:alpha_k_j}, and $I_t$ is the set of all indices $i_1,\ldots,i_t$ randomly selected at or before the $t$-th step of the algorithm.
\end{lemma}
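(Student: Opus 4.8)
The plan is to follow the blueprint of Lemma~\ref{lem:E2StoIHT}. First, Assumption~1 decouples the estimation step of Algorithm~\ref{alg:CStoGradMP} into $L$ independent single-vector StoGradMP iterations, one per column, so it suffices to prove the bound for the generic SMV problem $\min_{\vw}\frac1M\sum_{i=1}^M g_{i,j}(\vw)$ subject to $\norm{\vw}_{0,\mathcal{D}}\le k$, with the component functions $g_{i,j}$ from \eqref{eqn:asp1} and $\vw^{*}:=X_{\cdot,j}^{*}$ feasible. Write $\vb:=\vb^{t}$ and recall $\widehat\Gamma=\Gamma\cup\Lambda$ with $\abs{\Gamma}\le 2k$ and $\abs{\Lambda}\le k$, hence $\abs{\widehat\Gamma\cup\supp_{\mathcal{D}}(\vw^{*})}\le 4k$, which accounts for the subscript $4k$ on every constant. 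I will also use the importance-weighted representation $\frac1M\sum_{i} g_{i,j}(\vw)=E_{i}\bigl[\tfrac1{Mp(i)}g_{i,j}(\vw)\bigr]$, in which each summand $\tfrac1{Mp(i)}g_{i,j}$ is $\mathcal{D}$-RSS on $4k$-sets with constant $\rho_{4k}^{+}(i)/(Mp(i))\le\alpha_{4k}$, while the average $\tfrac1M\sum_i g_{i,j}$ is $\mathcal{D}$-RSC with constant $\rho_{4k}^{-}$.

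The crux is the optimality of the estimation step: since $\vb$ minimizes $\tfrac1M\sum_i g_{i,j}$ over $\mathcal{R}(\mathcal{D}_{\widehat\Gamma})$, its gradient is orthogonal to that subspace, so $\bigl\langle \tfrac1M\sum_i\nabla g_{i,j}(\vb),\vz\bigr\rangle=0$ for every $\vz\in\mathcal{R}(\mathcal{D}_{\widehat\Gamma})$, and in particular for $\vz=\mathcal{P}_{\widehat\Gamma}(\vb-\vw^{*})$. Adapting the StoGradMP estimation-step analysis of \cite{nguyen2014linear}, I would start from the $\mathcal{D}$-restricted strong convexity inequality for the pair $(\vb,\vw^{*})$, isolate a term proportional to $\norm{\vb-\vw^{*}}_2^{2}$, re-express the gradient inner products through the importance-weighted representation, and use the optimality identity to cancel the $\mathcal{P}_{\widehat\Gamma}$-component of the gradient at $\vb$. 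This leaves cross terms of two kinds: the increments $\tfrac1{Mp(i)}\bigl(\nabla g_{i,j}(\vb)-\nabla g_{i,j}(\vw^{*})\bigr)$, controlled by the $\mathcal{D}$-RSS constant $\alpha_{4k}$, and the genuine noise $\tfrac1{Mp(i)}\mathcal{P}_{\widehat\Gamma}\nabla g_{i,j}(\vw^{*})$ at the optimum. Bounding these via the Cauchy--Schwarz inequality \eqref{eqn:CSineqn} and Young's inequality $2\langle \vu,\vv\rangle\le\alpha_{4k}\norm{\vu}_2^{2}+\alpha_{4k}^{-1}\norm{\vv}_2^{2}$ produces exactly an $\alpha_{4k}\norm{\mathcal{P}_{\widehat\Gamma}(\vb-\vw^{*})}_2^{2}$ term on the right, an $\alpha_{4k}\norm{\vb-\vw^{*}}_2^{2}$ term to be absorbed on the left, and an $\alpha_{4k}^{-1}\norm{\mathcal{P}_{\widehat\Gamma}\nabla g_{i,j}(\vw^{*})}_2^{2}$ noise term; absorbing the middle term leaves the coefficient $2\rho_{4k}^{-}-\alpha_{4k}$ in front of $\norm{\vb-\vw^{*}}_2^{2}$, and dividing by it yields $\beta_1=\alpha_{4k}/(2\rho_{4k}^{-}-\alpha_{4k})$ together with the $\bigl[\alpha_{4k}(2\rho_{4k}^{-}-\alpha_{4k})\bigr]^{-1}$ prefactor of $\delta_1$.

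To finish, I would take the conditional expectation $E_{i_t|I_{t-1}}$ of the noise term, pull the worst sampling weight out of the weighted summand via $E_{i}\norm{\tfrac1{Mp(i)}\mathcal{P}_{\widehat\Gamma}\nabla g_{i,j}(\vw^{*})}_2^{2}\le\bigl(\min_{1\le i\le M}M^{2}p(i)^{2}\bigr)^{-1}E_{i}\norm{\mathcal{P}_{\widehat\Gamma}\nabla g_{i,j}(\vw^{*})}_2^{2}$, and then take $E_{I_{t-1}}$ and invoke the tower property (noting that $\widehat\Gamma$ is determined by $I_t$) to arrive at the form of $\delta_1$ in \eqref{eqn:beta_delta_1}.

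The step I expect to be the main obstacle is the projection bookkeeping in the second paragraph: the error $\vb-\vw^{*}$ is \emph{not} supported on $\widehat\Gamma$ (its complementary part is $-(I-\mathcal{P}_{\widehat\Gamma})\vw^{*}$), and $\mathcal{D}$ may be over-complete, so the optimality identity cancels only a portion of each inner product, and one must split and regroup the surviving pieces so that the only ``$\vb-\vw^{*}$''-type quantity remaining on the right is $\norm{\mathcal{P}_{\widehat\Gamma}(\vb-\vw^{*})}_2^{2}$, while the coefficient that migrates to the left comes out to precisely $2\rho_{4k}^{-}-\alpha_{4k}$ (which also forces the standing assumption $2\rho_{4k}^{-}>\alpha_{4k}$, i.e.\ $\beta_1>0$). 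Matching the constants to the exact expressions in \eqref{eqn:beta_delta_1}, rather than merely obtaining some bound of this shape, is where the care lies.
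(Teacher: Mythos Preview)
Two remarks, one about the target and one about the method.

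\textbf{The projection on the right-hand side.} The inequality you are aiming for, with $\mathcal{P}_{\widehat\Gamma}$ on the right, is not what the paper actually establishes. The paper's own proof terminates with
\[
\norm{\vb^t-X_{\cdot,j}^*}_2^2\;\le\;\frac1\phi\,\norm{\mathcal{P}_{\widehat{\Gamma}^{\,c}}(\vb^t-X_{\cdot,j}^*)}_2^2\;+\;\text{(noise)},
\]
i.e.\ with the \emph{complementary} projection $\mathcal{P}_{\widehat\Gamma^{\,c}}$, and this is also what is needed downstream, since the next lemma bounds exactly $E\norm{\mathcal{P}_{\widehat\Gamma^{\,c}}(\vb^t-X_{\cdot,j}^*)}_2^2$. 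So the displayed $\mathcal{P}_{\widehat\Gamma}$ in the statement is a typo. Your own mechanism confirms this: the optimality identity $\mathcal{P}_{\widehat\Gamma}\nabla F(\vb^t)=0$ annihilates the $\mathcal{P}_{\widehat\Gamma}$-component of the cross term $\langle\nabla F(\vb^t),\vb^t-\vw^*\rangle$, and what survives pairs the gradient increment against $\mathcal{P}_{\widehat\Gamma^{\,c}}(\vb^t-\vw^*)$, not $\mathcal{P}_{\widehat\Gamma}(\vb^t-\vw^*)$. Thus the quantity that naturally lands on the right is $\norm{\mathcal{P}_{\widehat\Gamma^{\,c}}(\vb^t-\vw^*)}_2^2$; your sketch has the wrong projection there.

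\textbf{The route to the constants.} Even after correcting the projection, your plan differs from the paper's. The paper does not argue directly from the first-order optimality of $\vb^t$ together with Young's inequality. Instead it imports from \cite[Lemma~2]{nguyen2014linear} the ``virtual gradient step'' bound with a free analytic parameter $\gamma$,
\[
\norm{\mathcal{P}_{\widehat\Gamma}(\vb^t-\vw^*)}_2^2\;\le\;2\bigl(1-(2\gamma-\gamma^2\alpha_{4k})\rho_{4k}^{-}\bigr)\norm{\vb^t-\vw^*}_2^2\;+\;\frac{2\gamma^2}{\min_i M^2 p(i)^2}\,E_i\norm{\mathcal{P}_{\widehat\Gamma}\nabla g_{i,j}(\vw^*)}_2^2,
\]
obtained by squaring via $(a+b)^2\le 2a^2+2b^2$ and $(EZ)^2\le E(Z^2)$. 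It then writes $\norm{\vb^t-\vw^*}_2^2=\norm{\mathcal{P}_{\widehat\Gamma}(\cdot)}_2^2+\norm{\mathcal{P}_{\widehat\Gamma^{\,c}}(\cdot)}_2^2$, substitutes, moves the $\norm{\vb^t-\vw^*}_2^2$ term across, and only then \emph{optimizes over $\gamma$}: the maximum of $\phi(\gamma)=2\rho_{4k}^{-}(2\gamma-\gamma^2\alpha_{4k})-1$ at $\gamma=1/\alpha_{4k}$ gives $\phi_{\max}=(2\rho_{4k}^{-}-\alpha_{4k})/\alpha_{4k}$, and this is precisely where $\beta_1=\alpha_{4k}/(2\rho_{4k}^{-}-\alpha_{4k})$ and the prefactor $2/[\alpha_{4k}(2\rho_{4k}^{-}-\alpha_{4k})]$ in $\delta_1$ come from. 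Your direct RSC\,+\,Young route may yield a bound of the same shape, but the specific constants in \eqref{eqn:beta_delta_1} are produced by this $\gamma$-optimization, not by a single application of Young's inequality, so you should expect to have to reproduce that step (or accept different constants).
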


\begin{proof}
Consider the problem \eqref{eqn:SMVobj}. Following the proof in \cite[Appendix C]{nguyen2014linear}, we can get
\[\begin{aligned}
\norm{\mathcal{P}_{\widehat{\Gamma}}(\vb^t-X_{\cdot,j}^*)}_2^2
&\leq2(1-(2\gamma-\gamma^2\alpha_{4k,j})\rho_{4k,j}^{-})\norm{\vb^t-X_{\cdot,j}^*}_2^2\\
&\phantom{\leq}+\frac{2\gamma^2}{\min\limits_{1\leq i\leq M}M^2(p(i))^2}E_i\norm{\mathcal{P}_{\widehat{\Gamma}}\nabla g_{i,j}(X_{\cdot,j})}_2^2.
\end{aligned}\]
Here we use the inequality $(a+b)^2\leq 2a^2+2b^2$ for $a,b\in\mathbb{R}$ and the expectation inequality $(EX)^2\leq E(X^2)$. Then we have
\[\begin{aligned}
\norm{\vb^t-X_{\cdot,j}^*}_2^2&=\norm{\mathcal{P}_{\widehat{\Gamma}}(\vb^t-X_{\cdot,j}^*)}_2^2
+\norm{\mathcal{P}_{\widehat{\Gamma}^c}(\vb^t-X_{\cdot,j}^*)}_2^2\\
&\leq 2(1-(2\gamma-\gamma^2\alpha_{4k,j})\rho_{4k,j}^{-})\norm{\vb^t-X_{\cdot,j}^*}_2^2\\
&\phantom{\leq}+\frac{2\gamma^2}{\min\limits_{1\leq i\leq M}M^2(p(i))^2}E_i\norm{\mathcal{P}_{\widehat{\Gamma}}\nabla g_{i,j}(X_{\cdot,j}^*)}_2^2+\norm{\mathcal{P}_{\widehat{\Gamma}^c}(\vb^t-X_{\cdot,j}^*)}_2^2.
\end{aligned}\]
Moving the first term on the right hand side to the left hand side leads to
\[
\begin{aligned}
\norm{\vb^t-X_{\cdot,j}^*}_2^2&
\leq \frac{2\gamma^2}{\phi\min\limits_{1\leq i\leq M}M^2(p(i))^2}E_i\norm{\mathcal{P}_{\widehat{\Gamma}}\nabla g_{i,j}(X_{\cdot,j})}_2^2+\frac1\phi\norm{\mathcal{P}_{\widehat{\Gamma}^c}(\vb^t-X_{\cdot,j}^*)}_2^2,
\end{aligned}
\]
where $\phi=2\rho_{4k,j}^{-}(2\gamma-\gamma^2\alpha_{4k,j})-1$. Maximizing $\phi$ with respect to $\gamma$ yields $\gamma=1/\alpha_{4k,j}$ and $\phi_{\max}=(2\rho_{4k,j}^{-}-\alpha_{4k,j})/\alpha_{4k,j}$. By choosing the optimal value of $\gamma$ and taking the expectation with respect to $I_t$ on the both sides of the above inequality, we get \eqref{eqn:E1StoGradMP}.
\end{proof}

Similarly, using the inequality $EX\leq \sqrt{E(X^2)}$ and the fact that $a\leq b+c$ yields $a^2\leq 2b^2+2c^2$, we are able to get the following result, which is different from Lemma 3 in \cite{nguyen2014linear} in that we consider the expectation for the $\ell_2$-norm squared here rather than that for the $\ell_2$-norm.
% Lemma 5.8.
\begin{lemma}\label{lem:E2StoGradMP}
Let $X^*$ be a feasible solution of \eqref{eqn:bmodel} and $X^0$ be the initial solution. Under Assumptions, the expectation of the recovery error squared at the $t$-th iteration of Algorithm~\ref{alg:CStoGradMP} for estimating the $j$-th column of $X^*$ is bounded by
\begin{equation}\label{eqn:E2StoGradMP}
E_{i_t}\norm{\mathcal{P}_{\widehat{\Gamma}^c}(\vb^t-X_{\cdot,j}^*)}_2^2
\leq\beta_2\norm{X_{\cdot,j}^t-X_{\cdot,j}^*}_2^2+\xi_2,
\end{equation}
where $i_t$ is the index randomly selected at the $t$-th iteration of the CStoGradMP and
\begin{equation}\label{eqn:beta_delta_2}
\begin{aligned}
\beta_2&=4\max\limits_{1\leq i\leq M}Mp(i)\frac{(2\eta_1^2-1)\rho_{4k,j}^{+}-\eta_1^2\rho_{4k,j}^{-}}{\eta_1^2\rho_{4k,j}^{-}}+\frac{2(\eta_1^2-1)}{\eta_1^2},\\
\xi_2&=8\left(\frac{\max\limits_{1\leq i\leq M}p(i)}{\rho_{4k,j}^{-}\min\limits_{1\leq i\leq M}p(i)}\right)^2\max\limits_{|\Omega|\leq 4k\atop 1\leq i\leq M}\norm{\mathcal{P}_{\Omega}\nabla g_{i,j}(X^*_{\cdot,j})}_2^2.
\end{aligned}
\end{equation}
\end{lemma}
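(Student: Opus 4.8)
The plan is to follow the proof of \cite[Lemma~3]{nguyen2014linear} for StoGradMP, specialized to the separable MMV setting of Assumption~1, but carrying the \emph{squared} recovery error through every step rather than the error itself, exactly as in the passage from \cite{nguyen2014linear} to Lemma~\ref{lem:E2StoIHT}. Invoking Assumption~1, I would first reduce to the $j$-th decoupled SMV problem \eqref{eqn:SMVobj}, writing $\vw^t=X_{\cdot,j}^t$, $\vw^*=X_{\cdot,j}^*$, $\vr^t=\nabla g_{i_t,j}(\vw^t)$. The key preliminary observation is that this lemma concerns only the ``Identify'' step: since $\vb^t\in\mathcal{R}(\mathcal{D}_{\widehat\Gamma})$ by construction and $\vw^t=\mathcal{P}_\Lambda(\vb^{t-1})$ is supported on $\Lambda\subseteq\widehat\Gamma$, both $\vb^t$ and $\vw^t$ vanish off $\widehat\Gamma$, so $\mathcal{P}_{\widehat\Gamma^c}(\vb^t-\vw^*)=\mathcal{P}_{\widehat\Gamma^c}(\vw^t-\vw^*)=-\mathcal{P}_{\widehat\Gamma^c}\vw^*$; with $\Omega=\supp_{\mathcal{D}}(\vw^t)\cup\supp_{\mathcal{D}}(\vw^*)$ ($|\Omega|\le 2k$) the target is $\norm{\mathcal{P}_{\Omega\setminus\widehat\Gamma}(\vw^t-\vw^*)}_2$, the portion of the error the greedy step misses.

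Next I would bound this quantity from the near-optimality of $\Gamma=\mathrm{approx}_{2k}(\vr^t,\eta_1)$. Since $|\Omega|\le 2k$, the defining inequality of $\mathrm{approx}_{2k}(\cdot,\eta_1)$ gives $\norm{\mathcal{P}_{\Gamma^c}\vr^t}_2\le\eta_1\norm{\mathcal{P}_{\Omega^c}\vr^t}_2$; splitting index sets controls $\norm{\mathcal{P}_{\Omega\setminus\Gamma}\vr^t}_2$ by $\norm{\mathcal{P}_{\Gamma\setminus\Omega}\vr^t}_2$ plus an $(\eta_1^2-1)$-weighted remainder, and $\Gamma\subseteq\widehat\Gamma$ lets me replace $\Omega\setminus\Gamma$ by $\Omega\setminus\widehat\Gamma$ on the left. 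Decomposing $\vr^t=\bigl(\nabla g_{i_t,j}(\vw^t)-\nabla g_{i_t,j}(\vw^*)\bigr)+\nabla g_{i_t,j}(\vw^*)$, the first summand is bounded via $\mathcal{D}$-RSS (Definition~\ref{def:DRSS}; all index sets here have size at most $4k$) and converted back to $\norm{\vw^t-\vw^*}_2$ using $\mathcal{D}$-RSC and the optimal step-size device already used in Lemma~\ref{lem:E1StoGradMP} (maximizing $2\rho_{4k}^-(2\gamma-\gamma^2\alpha_{4k})-1$ over $\gamma$), while the second summand is the stochastic ``noise'' $\mathcal{P}_{\widehat\Gamma}\nabla g_{i_t,j}(\vw^*)$. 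This yields, conditionally on $I_{t-1}$, a deterministic two-term bound
\[
\norm{\mathcal{P}_{\widehat\Gamma^c}(\vw^t-\vw^*)}_2\le c_1(i_t)\,\norm{\vw^t-\vw^*}_2+c_2(i_t),
\]
where $c_1(i_t)^2$ works out to $2Mp(i_t)\bigl((2\eta_1^2-1)\rho_{4k}^+-\eta_1^2\rho_{4k}^-\bigr)/(\eta_1^2\rho_{4k}^-)+(\eta_1^2-1)/\eta_1^2$ and $c_2(i_t)$ is built from $\mathcal{P}_{\widehat\Gamma}\nabla g_{i_t,j}(\vw^*)$, the inverse constant $1/\rho_{4k}^-$, and the scaling $1/(Mp(i_t))$.

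Finally I would square using $a\le b+c\Rightarrow a^2\le 2b^2+2c^2$, bound $\norm{\mathcal{P}_{\widehat\Gamma}\nabla g_{i_t,j}(\vw^*)}_2$ by $\max_{|\Omega|\le 4k,\,1\le i\le M}\norm{\mathcal{P}_\Omega\nabla g_{i,j}(X^*_{\cdot,j})}_2$, and take the conditional expectation $E_{i_t}$, using $E_{i_t}\bigl(Mp(i_t)\bigr)=M\sum_i p(i)^2\le\max_i Mp(i)$ for the first term and the crude max together with $1/(Mp(i_t))\le 1/(M\min_i p(i))$ for the second; the bound $EX\le\sqrt{EX^2}$ is used, as in Lemma~\ref{lem:E2StoIHT}, when the three error pieces are recombined. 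This produces $E_{i_t}\norm{\mathcal{P}_{\widehat\Gamma^c}(\vb^t-X^*_{\cdot,j})}_2^2\le\beta_2\norm{X^t_{\cdot,j}-X^*_{\cdot,j}}_2^2+\delta_2$ with $\beta_2,\delta_2$ exactly as stated.

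The step I expect to be the main obstacle is the middle one: carefully propagating the $\eta_1$-dependent identification slack and the random scaling $Mp(i_t)$ through the CoSaMP/GradMP-style support-comparison argument so that, after squaring and taking expectations, the constants collapse to precisely $\beta_2$ and $\delta_2$ --- in particular bookkeeping the factor-of-two losses and the interplay between $\rho_{4k}^+$ (from $\mathcal{D}$-RSS of $g_{i_t,j}$) and $\rho_{4k}^-$ (from the optimized step size). The reduction in the first paragraph and the expectation computation in the last are routine once that estimate is in place.
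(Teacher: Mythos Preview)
Your proposal is correct and matches the paper's approach: the paper's ``proof'' of this lemma is a single sentence saying to follow \cite[Lemma~3]{nguyen2014linear} but with the $\ell_2$-norm squared rather than the $\ell_2$-norm, using precisely the two devices you name --- the squaring inequality $a\le b+c\Rightarrow a^2\le 2b^2+2c^2$ and $EX\le\sqrt{EX^2}$. Your writeup is in fact considerably more detailed than what the paper provides; the support-comparison bookkeeping you flag as the main obstacle is exactly the content of \cite[Lemma~3]{nguyen2014linear}, and the paper simply defers to that reference.
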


% Theorem 5.9
\begin{theorem}[CStoGradMP]\label{thm:SMVStoGradMP}
Let $X^*$ be a feasible solution of \eqref{eqn:bmodel} and $X^0$ be the initial solution. Under Assumptions, at the $(t+1)$-th iteration of Algorithm~\ref{alg:CStoGradMP}, there exist $\widetilde{\kappa},\widetilde{\sigma}>0$ such that the expectation of the recovery error is bounded by
\begin{equation}\label{eqn:SMVStoGradMP_E}
E\norm{X^{t+1}-X^*}_F\leq \widetilde{\kappa}^{t+1}\norm{X^0-X^*}_F+\widetilde{\sigma},
\end{equation}
where $X^{t}_{\cdot,j}$ is the approximation of $X_{\cdot,j}^*$ at the $t$-th iteration of CStoGradMP with the initial guess $X^0_{\cdot,j}$.
\end{theorem}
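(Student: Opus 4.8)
The plan is to follow the same two-stage strategy used for Theorem~\ref{thm:SMVStoIHT}: first derive a one-step contraction for each column $j$ in the \emph{squared} $\ell_2$-norm, then unroll it in $t$ and aggregate the $L$ columns into the Frobenius norm via Jensen's inequality. All the ingredients are already in place: the approximation guarantee of the final estimation step of Algorithm~\ref{alg:CStoGradMP}, together with Lemmas~\ref{lem:E1StoGradMP} and~\ref{lem:E2StoGradMP}.

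Fix $j\in\{1,\dots,L\}$ and work with the separated problem \eqref{eqn:SMVobj}. First I would handle the estimation step: since $X^{t+1}_{\cdot,j}=\mathcal{P}_{\Lambda}(\vb^t)$ with $\Lambda=\mathrm{approx}_k(\vb^t,\eta_2)$ and $X^*_{\cdot,j}$ is $k$-sparse with respect to $\mathcal{D}$, the defining property of $\mathrm{approx}_k(\cdot,\eta_2)$ gives $\norm{X^{t+1}_{\cdot,j}-\vb^t}_2\le\eta_2\norm{\vb^t-(\vb^t)_k}_2\le\eta_2\norm{\vb^t-X^*_{\cdot,j}}_2$, hence $\norm{X^{t+1}_{\cdot,j}-X^*_{\cdot,j}}_2\le(1+\eta_2)\norm{\vb^t-X^*_{\cdot,j}}_2$ by the triangle inequality. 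Squaring, then invoking Lemma~\ref{lem:E1StoGradMP} to replace $\norm{\vb^t-X^*_{\cdot,j}}_2^2$ by $\beta_1\norm{\mathcal{P}_{\widehat\Gamma^c}(\vb^t-X^*_{\cdot,j})}_2^2+\delta_1$, and then Lemma~\ref{lem:E2StoGradMP} to replace that missed-support term by $\beta_2\norm{X^t_{\cdot,j}-X^*_{\cdot,j}}_2^2+\delta_2$, and finally taking $E_{i_t|I_{t-1}}$ followed by $E_{I_{t-1}}$ (using the tower property and the $I_{t-1}$-measurability of $X^t_{\cdot,j}$), I obtain the recursion
\[
E_{I_t}\norm{X^{t+1}_{\cdot,j}-X^*_{\cdot,j}}_2^2\le\kappa_j'\,E_{I_{t-1}}\norm{X^t_{\cdot,j}-X^*_{\cdot,j}}_2^2+\sigma_j',
\]
with $\kappa_j'=(1+\eta_2)^2\beta_{1,j}\beta_{2,j}$ and $\sigma_j'=(1+\eta_2)^2(\beta_{1,j}\delta_{2,j}+\delta_{1,j})$, where the subscript $j$ records that $\beta_1,\beta_2,\delta_1,\delta_2$ from \eqref{eqn:beta_delta_1}--\eqref{eqn:beta_delta_2} are the constants attached to $g_{i,j}$ in \eqref{eqn:asp1}. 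Provided $\kappa_j'<1$ — which holds under the same admissibility conditions on $\rho_{4k}^{\pm}$, $\alpha_{4k}$, $\eta_1$, $\eta_2$ that make $\kappa$ in Theorem~\ref{thm2} well defined — unrolling yields $E\norm{X^{t+1}_{\cdot,j}-X^*_{\cdot,j}}_2^2\le(\kappa_j')^{t+1}\norm{X^0_{\cdot,j}-X^*_{\cdot,j}}_2^2+\sigma_j'/(1-\kappa_j')$.

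The final step is the $\ell_2$-to-Frobenius aggregation already carried out in the proof of Theorem~\ref{thm:SMVStoIHT}: writing $\norm{X}_F^2=\sum_{j=1}^L\norm{X_{\cdot,j}}_2^2$, applying $E\norm{\cdot}_F\le\sqrt{E\norm{\cdot}_F^2}$, and bounding each $\kappa_j'$ by $\max_{1\le j\le L}\kappa_j'$, one obtains \eqref{eqn:SMVStoGradMP_E} with
\[
\widetilde{\kappa}=\sqrt{\max_{1\le j\le L}\kappa_j'}=(1+\eta_2)\sqrt{\max_{1\le j\le L}\beta_{1,j}\beta_{2,j}},\qquad
\eta=\sqrt{\frac{\sum_{j=1}^L\sigma_j'}{\,1-\max_{1\le j\le L}\kappa_j'\,}}.
\]

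I expect the main obstacle to be the expectation bookkeeping when composing Lemmas~\ref{lem:E1StoGradMP} and~\ref{lem:E2StoGradMP}: the two are stated with different conditioning layers ($E_{I_t}$ versus $E_{i_t}$), so one must lift Lemma~\ref{lem:E2StoGradMP} by $E_{I_{t-1}}$ before substituting, and verify that the composite is a genuine recursion in the \emph{total} expectation $E_{I_{t-1}}\norm{X^t_{\cdot,j}-X^*_{\cdot,j}}_2^2$ rather than an expression mixing partial conditionings. A secondary technical point is checking $\kappa_j'<1$ column by column — i.e., transcribing the hypotheses of Theorem~\ref{thm2} into the per-column RSC/RSS constants — so that the geometric series producing the noise floor $\sigma_j'/(1-\kappa_j')$ converges; once that is in hand, everything else is routine and mirrors the CStoIHT argument verbatim.
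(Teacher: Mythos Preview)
Your proposal is correct and follows essentially the same two-stage strategy as the paper: a per-column squared-$\ell_2$ recursion built from Lemmas~\ref{lem:E1StoGradMP} and~\ref{lem:E2StoGradMP}, unrolled geometrically, then aggregated into the Frobenius bound via Jensen exactly as in Theorem~\ref{thm:SMVStoIHT}.

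There is one small but genuine difference worth noting. In bounding the estimation step, the paper squares via $(a+b)^2\le 2a^2+2b^2$, giving $\norm{X^{t+1}_{\cdot,j}-X^*_{\cdot,j}}_2^2\le(2+2\eta_2^2)\norm{\vb^t-X^*_{\cdot,j}}_2^2$ and hence $\kappa_j=(2+2\eta_2^2)\beta_1\beta_2$. You instead apply the triangle inequality first and then square, obtaining the sharper factor $(1+\eta_2)^2$; since $(1+\eta_2)^2\le 2+2\eta_2^2$ always, your $\kappa_j'$ is never worse and is strictly smaller unless $\eta_2=1$ (where both equal $4$). This buys you a tighter contraction coefficient at no extra cost, and in the special case $\eta_1=\eta_2=1$, $p(i)=1/M$, $\alpha_{4k}=\rho_{4k}^-$ the two routes coincide and yield the paper's comparison $\widetilde\kappa=2\kappa$. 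Your cautionary remark about the conditioning layers when composing the two lemmas is on point and is exactly the care the paper's chain of inequalities implicitly requires; you also correctly write $\mathcal{P}_{\widehat\Gamma^c}$ for the missed-support term, which is what the proof of Lemma~\ref{lem:E1StoGradMP} actually delivers and what Lemma~\ref{lem:E2StoGradMP} controls.
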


\begin{proof}
At the $t$-th iteration of Algorithm~\ref{alg:CStoGradMP}, we have
\[
\norm{X_{\cdot,j}^{t+1}-\vb^t}_2^2\leq\eta_2^2\norm{\vb_{(k)}^t-\vb^t}_2^2\leq\eta_2^2\norm{X_{\cdot,j}^*-\vb^t}_2^2,
\]
where $\vb^t_{(k)}$ is the best $k$-sparse approximation of $\vb^t$ with respect to the atom set $\mathcal{D}$. Therefore, we get
\[\begin{aligned}
\norm{X_{\cdot,j}^{t+1}-X_{\cdot,j}^*}_2^2
&\leq \norm{X_{\cdot,j}^{t+1}-\vb^t+\vb^t-X_{\cdot,j}^*}_2^2\\
&\leq 2\norm{X_{\cdot,j}^{t+1}-\vb^t}_2^2+2\norm{\vb^t-X_{\cdot,j}^*}_2^2\\
&\leq (2+2\eta_2^2)\norm{\vb^t-X_{\cdot,j}^*}_2^2.
\end{aligned}\]
Next we establish the relationships among various expectations
\[
\begin{aligned}
E_{I_t}\norm{X_{\cdot,j}^{t+1}-X_{\cdot,j}}_2^2
&\leq (2+2\eta_2^2)E_{I_t}\norm{\vb^t-X_{\cdot,j}^*}_2^2\\
&\leq (2+2\eta_2^2)\left(\beta_1E_{I_t}\norm{\mathcal{P}_{\hat{\Gamma}}(\vb^t-X_{\cdot,j}^*)}_2^2+\xi_1\right)\\
&\leq (2+2\eta_2^2)\beta_1\left(\beta_2E_{I_t}\norm{X_{\cdot,j}^t-X_{\cdot,j}^*}_2^2+\xi_2\right)+(2+2\eta_2^2)\xi_1\\
&:=\kappa_j\norm{X_{\cdot,j}^t-X_{\cdot,j}^*}_2^2+\sigma_j,
\end{aligned}
\]
where the first inequality is guaranteed by Lemma~\ref{lem:E1StoGradMP} and the second inequality is guaranteed by Lemma~\ref{lem:E2StoGradMP}.
Here the contraction coefficient $\kappa_j$ and the tolerance parameter $\sigma_j$ are defined by
\[
\kappa_j=(2+2\eta_2^2)\beta_1\beta_2,\quad
\sigma_j=(2+2\eta_2^2)\beta_1\xi_2+(2+2\eta_2^2)\xi_1,
\]
where $\beta_1,\xi_1$ are defined in \eqref{eqn:beta_delta_1} and $\beta_2,\xi_2$ are defined in \eqref{eqn:beta_delta_2}.
Then similar to the proof of Theorem~\ref{thm:SMVStoIHT}, we can derive that
\[
E\norm{X^{t+1}-X^*}_F\leq \widetilde{\kappa}^{t+1}\norm{X^0-X^*}_F+\widetilde{\sigma}
\]
where
\[
\widetilde{\kappa}=\sqrt{\max\limits_{1\leq j\leq L}\kappa_j},\quad \mbox{and}\quad
\widetilde{\sigma} = \sqrt{\frac{\sum_{j=1}^L\sigma_j}{1-\max\limits_{1\leq j\leq L}{\kappa_j}}}.
\]
In particular, if $\eta_1=\eta_2=1$, $p(i)=1/M$, $\alpha_{4k,j}=\alpha_{4k}$, $\rho_{4k,j}^+=\rho_{4k}^+$ and $\rho_{4k,j}^-=\rho_{4k}^-$ for $j=1,\ldots,L$, then we have
\[
\widetilde{\kappa}=4\sqrt{\frac{\alpha_{4k}(\rho_{4k}^{+}-\rho_{4k}^{-})}{\rho_{4k}^{-}(2\rho_{4k}^{-}-\alpha_{4k})}}.
\]
If, in addition, $\alpha_{4k}=\rho_{4k}^-$, then we have $\widetilde{\kappa}=2\kappa$,
where the contraction coefficient $\kappa$ for MStoGradMP is given in \eqref{eqn:kappa_MStoGradMP}, which implies that MStoGradMP converges faster than CStoGradMP in this case due to the smaller contraction coefficient. Compared with MStoIHT, MStoGradMP has even larger convergence improvement in terms of recovery accuracy and running time.
\end{proof}

\noindent\textbf{Remark.}
Convergence analysis in this section provides useful theoretical tools to compare the proposed MMV stochastic greedy algorithms and their SMV counterparts. First, we provide general guarantees via contraction coefficients for all proposed algorithms in Theorems \ref{thm1}, \ref{thm:SMVStoIHT}, \ref{thm2} and \ref{thm:SMVStoGradMP}. Since the contraction coefficient controls the convergence speed of each algorithm, it can provide theoretical guidance on parameter selection in such type of stochastic greedy algorithms. Second, the complicated form of $\kappa$ and $\hat{\kappa}$ can be simplified in some special cases, e.g., the case when $\eta=1$. Lastly, the relationship $\kappa=\hat{\kappa}/\sqrt{2}$ indicates why MStoIHT/MStoGradMP performs better than CStoIHT/CStoGradMP in expectation.

\section{Distributed Compressive Sensing Application}\label{sec:DCS}
In this section, we show that the objective function commonly used in the distributed compressive sensing problem satisfies the $\mathcal{D}$-RSC and $\mathcal{D}$-RSS properties, which paves the theoretical foundation for using the proposed algorithms in this application.
Suppose that there are $L$ underlying signals $\vx_j\in\mathbb{R}^n$ for $j=1,2,\ldots,L$, and their measurements are generated by
\[
\vy_j=A^{(j)} \vx_j,\quad j=1,2,\ldots,L
\]
where $A^{(j)}\in\mathbb{R}^{m\times n}$ ($m\ll n$) is the measurement matrix (a.k.a. the sensing matrix). For discussion simplicity, we assume all measurement matrices are the same, i.e., $A^{(j)}=A=[A_{\cdot,1},\ldots,A_{\cdot,n}]$. By concatenating all vectors as a matrix, we rewrite the above equation as $Y=AX$ where
$Y=[\vy_1,\ldots,\vy_L]\in\mathbb{R}^{m\times L}$, and $X=[\vx_1,\ldots,\vx_L]\in\mathbb{R}^{n\times L}$.

Now assume that the atom set is finite and denote $\mathcal{D}=\{\vd_1,\ldots,\vd_{N}\}$ with the corresponding dictionary $D=[\vd_1,\ldots,\vd_{N}]$. Consider the following distributed compressive sensing model with common sparse supports \cite{baron2006distributed}
\begin{equation}\label{eqn:CSmodel}
\begin{aligned}
&\min_{X}\frac1{2m}\sum_{j=1}^L\norm{\vy_j-A \vx_j}_2^2\\
&\qquad\st\quad \vx_j=D\theta_j\quad\supp(\theta_j)=\Omega\subseteq\{1,2,\ldots,N\}.
\end{aligned}\end{equation}
Here the objective function has the form
\begin{equation}\label{eqn:CSF}
F(X)=\frac1{2m}\norm{Y-A X}_F^2.
\end{equation}
Then $F(X)$ can be written as
$F(X)=\frac1M\sum_{i=1}^Mf_i(X)$,
where $M=m/b$ and
\begin{equation}\label{eqn:CSfi}
f_i(X)=\frac{1}{2b}\sum_{j=1}^L(Y_{i,j}-\sum_{k=1}^nA_{i,k}X_{k,j})^2=\frac{1}{2b}\norm{Y_{i,\cdot}-A_{i,\cdot}X}_2^2.
\end{equation}
The above expression shows that $f_i$'s satisfy the Assumption (a) and thereby the concatenated algorithms in Section~\ref{sec:algo} can be applied.
We first compute the partial derivative. For $s=1,2,\ldots,n$ and $t=1,2,\ldots,L$, we have
\[
\begin{aligned}
\frac{\partial f_i(X)}{\partial X_{s,t}}&
=\frac{1}{2b}\sum_{j=1}^L2\left(\sum_{k=1}^nA_{i,k}X_{k,j}-Y_{i,j}\right)\sum_{k=1}^nA_{i,k}\frac{\partial X_{k,j}}{\partial X_{s,t}}\\
&=\frac1b\sum_{j=1}^L\left(\sum_{k=1}^nA_{i,k}X_{k,j}-Y_{i,j}\right)\sum_{k=1}^nA_{i,k}\delta_{k,s}\delta_{j,t}\\
&=\frac1b\left(\sum_{k=1}^nA_{i,k}X_{k,t}-Y_{i,t}\right)A_{i,s}.
\end{aligned}
\]
Here $\delta_{i,j}=1$ if $i=j$ and zero otherwise. Thus the generalized gradient of $f_i(X)$ with respect to $X$ has the form
\[
\frac{\partial f_i(X)}{\partial X}=\frac1b\begin{bmatrix}\dfrac{\partial f_i(X)}{\partial X_{s,t}}\end{bmatrix}_{st}
=\frac1bA_{i,\cdot}^T(A_{i,\cdot}X-Y_{i,\cdot}).
\]

% Lemma 6.1
\begin{lemma}\label{lem1}
If the sensing matrix $A\in\mathbb{R}^{m\times n}$ satisfies the Restricted Isometry Property (RIP), i.e., there exists $\delta_k>0$ such that
\[
(1-\delta_k)\norm{\vx}_2^2\leq \norm{A\vx}_2^2\leq(1+\delta_k)\norm{\vx}_2^2
\]
for any $k$-sparse vector $\vx\in\mathbb{R}^n$, then the function $F(X)$ defined in \eqref{eqn:CSF} satisfies the $\mathcal{D}$-restricted strong convexity property.
\end{lemma}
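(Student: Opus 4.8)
The plan is to use that $F$ in \eqref{eqn:CSF} is a quadratic functional, so the left-hand side of the $\mathcal{D}$-RSC inequality \eqref{eqn:DRSC} can be computed \emph{exactly} rather than merely bounded. First I would set $Z=X'-X$ and expand, via \eqref{eqn:trieqn},
\[
\norm{Y-AX'}_F^2=\norm{(Y-AX)-AZ}_F^2=\norm{Y-AX}_F^2-2\langle Y-AX,AZ\rangle+\norm{AZ}_F^2,
\]
which gives $F(X')-F(X)=-\tfrac1m\langle Y-AX,AZ\rangle+\tfrac1{2m}\norm{AZ}_F^2$. Using the generalized gradient $\frac{\partial F}{\partial X}(X)=\tfrac1m A^T(AX-Y)$ (obtained by averaging the per-component formula for $\frac{\partial f_i}{\partial X}$ derived just above the lemma), the inner-product term in \eqref{eqn:DRSC} is $\langle\frac{\partial F}{\partial X}(X),Z\rangle=\tfrac1m\langle AX-Y,AZ\rangle=-\tfrac1m\langle Y-AX,AZ\rangle$, so the cross terms cancel and I am left with the clean identity
\[
F(X')-F(X)-\Big\langle\frac{\partial F}{\partial X}(X),X'-X\Big\rangle=\frac1{2m}\norm{A(X'-X)}_F^2.
\]

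Next I would lower-bound $\norm{A(X'-X)}_F^2$ using RIP. Writing the Frobenius norm columnwise, $\norm{A(X'-X)}_F^2=\sum_{j=1}^L\norm{A(X'_{\cdot,j}-X_{\cdot,j})}_2^2$. The restricted-support hypothesis $|\supp_{\mathcal{D}}^r(X)\cup\supp_{\mathcal{D}}^r(X')|\le k$ forces every column difference $X'_{\cdot,j}-X_{\cdot,j}$ to be supported on a common index set of size at most $k$, hence $k$-sparse; applying the RIP lower bound columnwise and summing gives $\norm{A(X'-X)}_F^2\ge(1-\delta_k)\norm{X'-X}_F^2$. Combining with the identity above yields \eqref{eqn:DRSC} with $\rho_k^-=(1-\delta_k)/m$, which is strictly positive in the standard RIP regime $\delta_k<1$, establishing the $\mathcal{D}$-RSC property of Definition~\ref{def:DRSC}.

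The one point that needs care — and the only real obstacle — is the passage from the joint-support condition to columnwise $k$-sparsity so that RIP applies: when $\mathcal{D}=\{\ve_i\}$ is the canonical basis this is immediate, while for a general dictionary one writes $X'_{\cdot,j}-X_{\cdot,j}=D\xi_j$ with $\xi_j$ supported on the size-$\le k$ joint index set and invokes the restricted isometry constant of the composite matrix $AD$ in place of $\delta_k$. Everything else is a routine quadratic expansion; in fact the mirror-image computation — bounding $\norm{A(X'-X)}_F^2\le(1+\delta_k)\norm{X'-X}_F^2$ — would in the same breath give the $\mathcal{D}$-RSS property (Definition~\ref{def:DRSS}) with $\rho_k^+=(1+\delta_k)/m$, since for the quadratic $F$ one has $\frac{\partial F}{\partial X}(X)-\frac{\partial F}{\partial X}(X')=\tfrac1m A^TA(X-X')$ and $\norm{A^TAZ}_F\le\rho_k^+\norm{Z}_F$ on the restricted support follows from the same RIP bound.
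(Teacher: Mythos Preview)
Your argument is correct and is essentially the paper's own proof: expand the quadratic to obtain the exact remainder $\tfrac{1}{2m}\norm{A(X'-X)}_F^2$, then apply RIP columnwise and sum to get the Frobenius-norm lower bound (your $\rho_k^-=(1-\delta_k)/m$ even matches Definition~\ref{def:DRSC} more carefully than the paper's stated $(1-\delta_k)/(2m)$). The only quibble is tangential to the lemma itself: in your closing aside on $\mathcal{D}$-RSS, the inequality $\norm{A^TAZ}_F\le\rho_k^+\norm{Z}_F$ does not follow from RIP on $Z$ alone, since $A^TAZ$ is generally not row-sparse---you would need an operator-norm bound on $A^T$ (or restrict to a projected gradient), which is why the paper proves $\mathcal{D}$-RSS for the block functions $f_i$ under a separate hypothesis in Lemma~\ref{lem2} rather than deducing it here.
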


% Proof of Lemma 6.1 %% label: lem1
\begin{proof}
Let $X\in\mathbb{R}^{n\times L}$ with $k$ nonzero rows, which implies that each column of $X$ has at most $k$ nonzero components. By the RIP of $A$, we have
\[
(1-\delta_k)\norm{X_{\cdot,j}}_2^2\leq \norm{A X_{\cdot,j}}_2^2\leq (1+\delta_k)\norm{X_{\cdot,j}}_2^2,
\]
for $j=1,\ldots,L$. Note that $\norm{X}_F^2=\sum_{j=1}^L\norm{X_{\cdot,j}}_2^2$. Thus we get
\[
(1-\delta_k)\norm{X}_F^2\leq \norm{A X}_F^2\leq (1+\delta_k)\norm{X}_F^2.
\]
For any two $X,X'\in\mathbb{R}^{n\times L}$ with $|\supp_{\mathcal{D}}^r(X)\cup\supp_{\mathcal{D}}^r(X')|\leq k$, we have
\[
\begin{aligned}
&\phantom{=}F(X')-F(X)-\Big\langle \frac{\partial F(X)}{\partial X},X'-X\Big\rangle \\
&=\frac1{2m}\left(\norm{Y-A X'}_F^2-\norm{Y-A X}_F^2\right)-\Big\langle \frac1mA^T(A X-Y),X'-X\Big\rangle\\
&=\frac1{2m}\norm{A(X'-X)}_F^2\geq\frac{1-\delta_k}{2m}\norm{X'-X}_F^2.
\end{aligned}
\]
Thus $F(X)$ satisfies the $\mathcal{D}$-restricted strong convexity property with $\rho^{-}_k=\frac{1-\delta_k}{2m}$.
\end{proof}

% Lemma 6.2
\begin{lemma}\label{lem2}
If the sensing matrix $A\in\mathbb{R}^{m\times n}$ satisfies the following property: for any $k$-sparse vector $\vx\in\mathbb{R}^n$, there exists $\delta_k>0$ such that
\[
\frac1b\norm{A_{\tau_i,\cdot}^TA_{\tau_i,\cdot}\vx}_2\leq(1+\delta_k)\norm{\vx}_2
\]
where $A_{\tau_i,\cdot}$ is formed by extracting rows of $A$ with row indices in the $i$-th batch index set $\tau_i$.
Then the function $f_i(X)$ defined in \eqref{eqn:CSfi} satisfies the $\mathcal{D}$-restricted strong smoothness property.
\end{lemma}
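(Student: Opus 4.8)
The plan is to mirror the proof of Lemma~\ref{lem1}, but now targeting the gradient-Lipschitz estimate rather than a lower bound. Note that the hypothesis here is a one-sided \emph{upper} bound on $\tfrac1b\norm{A_{\tau_i,\cdot}^TA_{\tau_i,\cdot}\vx}_2$, which is precisely the quantity that controls $\rho_k^+(i)$ for the batch function $f_i$; so I expect the conclusion actually established to be the $\mathcal{D}$-restricted strong \emph{smoothness} property of $f_i$, with constant $\rho_k^+(i)=1+\delta_k$ (the word ``convexity'' in the statement being a slip, parallel to the one in the conclusion of Lemma~\ref{lem1}).

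First I would use the closed form of the generalized gradient already computed in the excerpt, namely $\dfrac{\partial f_i(X)}{\partial X}=\tfrac1b A_{\tau_i,\cdot}^T\big(A_{\tau_i,\cdot}X-Y_{\tau_i,\cdot}\big)$, and observe that for any $X,X'\in\mathbb{R}^{n\times L}$ the affine term cancels, so that
\[
\frac{\partial f_i}{\partial X}(X)-\frac{\partial f_i}{\partial X}(X')=\frac1b A_{\tau_i,\cdot}^T A_{\tau_i,\cdot}\,(X-X').
\]
Next I would invoke the support restriction $|\supp_{\mathcal{D}}^r(X)\cup\supp_{\mathcal{D}}^r(X')|\leq k$ from Definition~\ref{def:DRSS}, which forces $X-X'$ to have at most $k$ nonzero rows and hence forces each column $(X-X')_{\cdot,j}$ to be a $k$-sparse vector in $\mathbb{R}^n$. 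Applying the assumed inequality column-by-column and summing over $j$ via $\norm{U}_F^2=\sum_{j=1}^L\norm{U_{\cdot,j}}_2^2$ gives
\[
\Big\|\frac{\partial f_i}{\partial X}(X)-\frac{\partial f_i}{\partial X}(X')\Big\|_F^2
=\sum_{j=1}^L\Big\|\tfrac1b A_{\tau_i,\cdot}^T A_{\tau_i,\cdot}(X-X')_{\cdot,j}\Big\|_2^2
\leq(1+\delta_k)^2\sum_{j=1}^L\norm{(X-X')_{\cdot,j}}_2^2=(1+\delta_k)^2\norm{X-X'}_F^2,
\]
and taking square roots yields $\big\|\tfrac{\partial f_i}{\partial X}(X)-\tfrac{\partial f_i}{\partial X}(X')\big\|_F\leq(1+\delta_k)\norm{X-X'}_F$, i.e.\ $f_i$ satisfies $\mathcal{D}$-RSS with $\rho_k^+(i)=1+\delta_k$.

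The computation is short, so the real obstacle is bookkeeping rather than anything technical: one must (i) reconcile the statement with the hypothesis, since an upper bound of this form supports a smoothness conclusion, not a convexity one; and (ii) keep track of the fact that, under the batched reformulation \eqref{eqn:bmodel}, the index $i$ of $f_i$ refers to the batch $\tau_i$, so that the single-row object $A_{i,\cdot}$ in \eqref{eqn:CSfi} is really the submatrix $A_{\tau_i,\cdot}\in\mathbb{R}^{b\times n}$ and the $\tfrac1b$ normalization of the hypothesis matches the $\tfrac1{2b}$ normalization of $f_i$ exactly after differentiation. Once those are pinned down, the reduction from matrices to columns through the Frobenius identity is immediate and the RIP-type assumption does the rest; no analogue of the ``moving a term across'' trick from Lemma~\ref{lem:E1StoGradMP} is needed here because only the upper estimate is claimed.
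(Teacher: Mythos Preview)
Your proposal is correct and follows essentially the same route as the paper: compute the gradient difference as $\tfrac1b A_{\tau_i,\cdot}^T A_{\tau_i,\cdot}(X-X')$, apply the assumed column-wise bound to each $k$-sparse column of $X-X'$, and sum via the Frobenius identity to obtain the $\mathcal{D}$-RSS constant $\rho_k^+(i)=1+\delta_k$. You also correctly diagnose the terminological slip (the hypothesis yields strong \emph{smoothness}, not convexity), which the paper itself repeats in its concluding sentence while nevertheless reporting the smoothness constant $\rho_k^+(i)$.
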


% Proof of Lemma 6.2 %% label: lem2
\begin{proof}
Let $X\in\mathbb{R}^{n\times L}$ have $k$ nonzero rows. Then for $j=1,\ldots,L$, we have
\[
\frac1b\norm{A_{\tau_i,\cdot}^TA_{\tau_i,\cdot}X_{\cdot,j}}_2\leq(1+\delta_k)\norm{X_{\cdot,j}}_2,
\]
which implies that
\[
\frac1b\norm{A_{\tau_i,\cdot}^TA_{\tau_i,\cdot}X}_F\leq(1+\delta_k)\norm{X}_F.
\]
For any two $X,X'\in\mathbb{R}^{n\times L}$ with $|\supp_{\mathcal{D}}^r(X)\cup\supp_{\mathcal{D}}^r(X')|\leq k$, we have
\[
\begin{aligned}
&\phantom{=}\norm{\frac{\partial f_i(X)}{\partial X}-\frac{\partial f_i(X')}{\partial X}}_F\\
&=\frac1b\norm{A_{i,\cdot}^T(A_{i,\cdot}X-Y_{i,\cdot})-A_{i,\cdot}^T(A_{i,\cdot}X'-Y_{i,\cdot})}_F\\
&=\frac1b\norm{A_{i,\cdot}^TA_{i,\cdot}(X-X')}_F\leq (1+\delta_k)\norm{X-X'}_F.
\end{aligned}
\]
Therefore $f_i(X)$ satisfies the $\mathcal{D}$-restricted strong convexity with $\rho^+_k(i)={1+\delta_k}$.
\end{proof}

By Lemma~\ref{lem1}, Lemma~\ref{lem2} and the convergence analysis in Section~\ref{sec:convergence}, the contraction coefficient in the proposed algorithms depends on the coefficient in the RIP condition, whose infimum for some special type of matrices are available \cite{foucart2013mathematical}. In addition, the convergence guarantees of all proposed algorithms in terms of the RIP constant for the distributed compressive sensing can be obtained by plugging the expression of $\rho^+_k$ and $\rho^-_k$ in the proofs of Lemmas \ref{lem1} and \ref{lem2} into $\kappa$, $\widehat{\kappa}$ and $\widetilde{\kappa}$ in Theorems \ref{thm1}, \ref{thm:SMVStoIHT}, \ref{thm2}, and \ref{thm:SMVStoGradMP}.

\section{Numerical Experiments}\label{sec:exp}
In this section, we conduct a variety of numerical experiments to validate the effectiveness of the proposed algorithms. More specifically, our tests include reconstruction of row sparse signals from a linear system and joint sparse video sequence recovery. To compare different results quantitatively, we use the relative error defined as
$\mbox{ReErr}={\norm{X^t-X^*}_F}/{\norm{X^*}_F}$,
where $X^*$ is the ground truth and $X^t$ is the estimation of $X^*$ at the $t$-th iteration. Regarding the computational efficiency, we also record the running time which counts all the computation time over a specified number of iterations excluding data loading or generation. Here we record the running time by using the commands \verb"tic" and \verb"toc" in Matlab. To assess the concatenated SMV algorithms, i.e., CStoIHT and CStoGradMP, we apply the SMV algorithm sequentially to the same sensing matrix and all columns of the measurement matrix $Y$, and save all intermediate approximations of each column of $X$ for further computation of the relative error. In all tests, we use the discrete uniform distribution, i.e., $p(i)=1/M$ for $i=1,2,\ldots,M$ in the non-batched versions and $p(i)=1/d$ for $i=1,2,\ldots,d$ in the batched versions. The parameter $\eta$ is fixed as 1. By default, each algorithm is stopped when either the relative error between two subsequent approximations of $X^*$ reaches the tolerance or the maximum number of iterations is achieved. All our experiments are performed in a desktop with an Intel\textsuperscript{\textregistered} Xeon\textsuperscript{\textregistered} CPU E5-2650 v4 @ 2.2GHz and 64GB RAM in double precision. The algorithms are implemented in Matlab 2016a running on Windows 10.

\subsection{Joint Sparse Matrix Recovery}
In the first set of experiments, we compare the proposed algorithms and their concatenated SMV counterparts in terms of reconstruction error and running time. In particular, we investigate the impact of the sparsity level $k$, and the number of underlying signals $L$ to be reconstructed on the performance of BCStoIHT, BMStoIHT, BCStoGradMP and BMStoGradMP, in terms of relative error and the running time. To reduce randomness in the results, we run 50 trials for each test with fixed parameters and then take the average over the number of trials. Note that since different trials may take different number of iterations to reach the desired accuracy, we only count the common iterations when comparing the reconstruction error and the runtime.

First, we compare CStoIHT and MStoIHT in both non-batched and batched versions, and fix the maximum number of iterations as 1000, the stopping criteria tolerance $\varepsilon=10^{-6}$ and $\gamma=1$ in both algorithms. To start with, we create a sensing matrix $A\in\mathbb{R}^{100\times 200}$ where each entry follows the normal distribution with zero mean and variance of $1/100$ and each column of $A$ is normalized by dividing its $\ell_2$-norm. In this way, it can be shown that the spark of $A$, i.e., the smallest number of linearly dependent columns of $A$, is 100 with probability one \cite{Elad2010}. To create a signal matrix $X^*\in\mathbb{R}^{200\times 40}$, we first generate a Gaussian distributed random matrix of size $200\times 40$, and then randomly zero out $(200-k)$ rows where $k$ is the row sparsity of $X^*$. The measurement matrix $Y$ is created by $AX$ for the noise-free cases. By choosing the sparsity level $k\in\{10,20\}$ and the batch size $b\in\{1,10\}$, we obtain the results shown in Figure~\ref{fig:StoIHT:sCmp}. Since the initial guess for the signal matrix is set a zero matrix, all the error curves start with the point $(0,1)$. Notice that to show the computational efficiency, we use the running time in seconds as the horizonal axis rather than the number of iterations. It can be seen that as the sparsity level grows, i.e., the signal matrix is less joint sparse, more running time (or iterations) is required to achieve the provided tolerance in terms of the relative reconstruction error. Meanwhile, as the batch size increases, BMStoIHT performs better than the sequential BCStoIHT. With large sparsity levels, the inaccurate joint support obtained in the concatenated SMV algorithms cause large relative errors in the first few iterations (see Figure~\ref{fig:StoIHT:sCmp}).

Next, we fix the sparsity level $k$ as 5 and choose the number of signals as $L\in\{40, 80\}$. Figure~\ref{fig:StoIHT:LCmp} compares the results obtained by BCStoIHT and BMStoIHT when the batch size is 1 and 10. In general, BMStoIHT takes less running time than its sequential concatenated SMV counterpart. We can see that mini-batching significantly improves the reconstruction accuracy and reduces the running time of BMStoIHT. After a large number of tests, we also find that the computational time of BMStoIHT is almost linear with respect to the number of signals to be reconstructed. More numerical experiments can be found in our more recent work on the hyperspectral diffuse optical imaging \cite{durgin2019fast}. Lastly, to test the robustness to noise, we add the Gaussian noise with zero mean and standard deviation (a.k.a. noise level) $\sigma\in\{0.04,0.08\}$ to the measurement matrix $Y$. The relative errors for all BCStoIHT and BMStoIHT results versus running time are shown in Figure~\ref{fig:StoIHT:noise}. It is worth noting that the change of sparsity and noise levels have insignificant impact on the running time, which explains that the curve corresponding to the same algorithm stops almost at the same horizontal coordinate in Figure~\ref{fig:StoIHT:sCmp} and Figure~\ref{fig:StoIHT:noise}. By contrast, the running time grows as the number of signals to be recovered increases which suggests that the endpoint of each curve has different horizontal coordinates in Figure~\ref{fig:StoIHT:LCmp}.

\setlength{\tabcolsep}{0pt}
\begin{figure}
\centering
\begin{tabular}{cc}
\includegraphics[width=.46\textwidth]{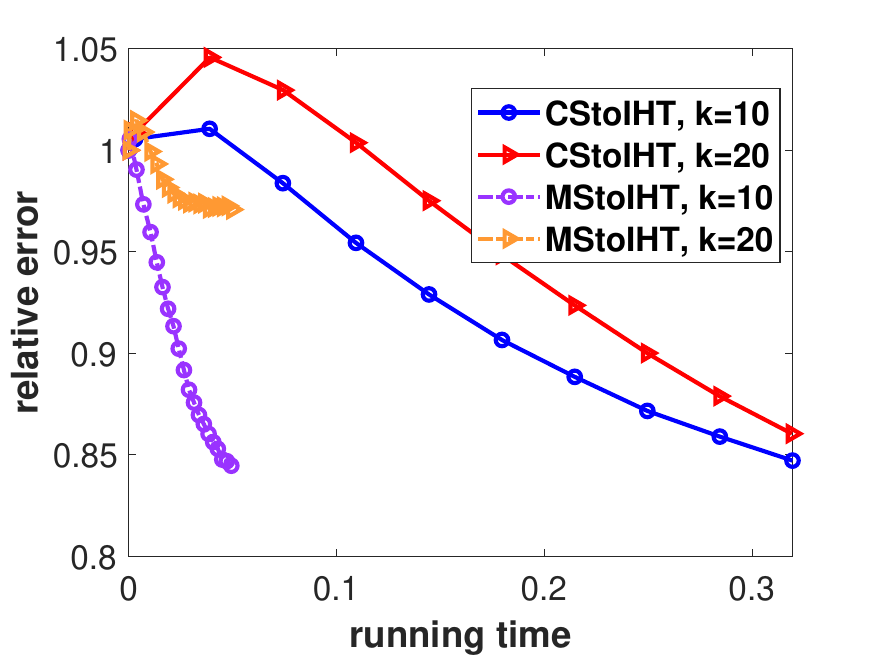}&
\includegraphics[width=.46\textwidth]{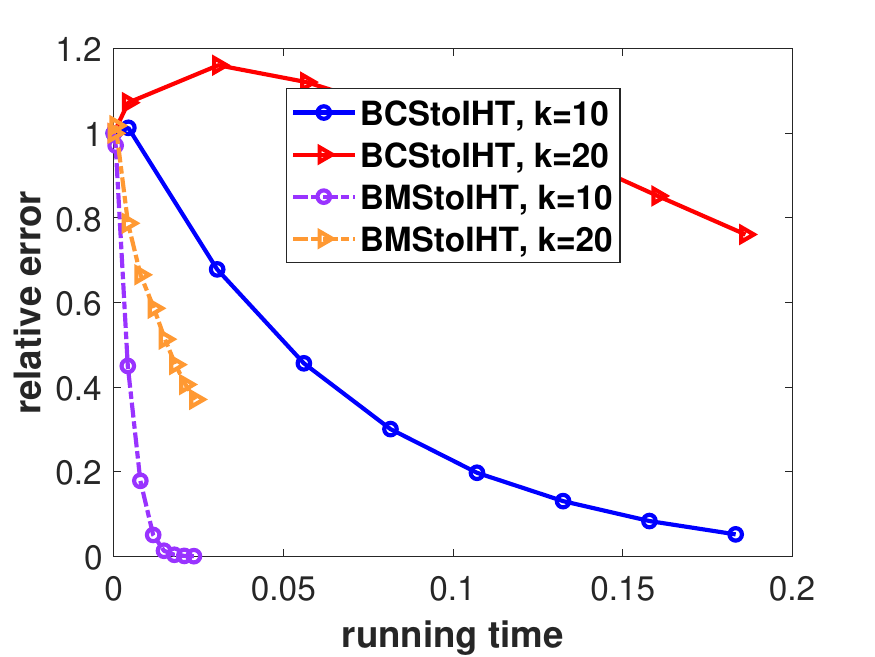}
\end{tabular}
\caption{Comparison of BCStoIHT and BMStoIHT for various sparsity levels of the signal matrix. From left to right: batch sizes are 1 and 10.}\label{fig:StoIHT:sCmp}
\end{figure}

\begin{figure}
\centering
\begin{tabular}{cc}
\includegraphics[width=.46\textwidth]{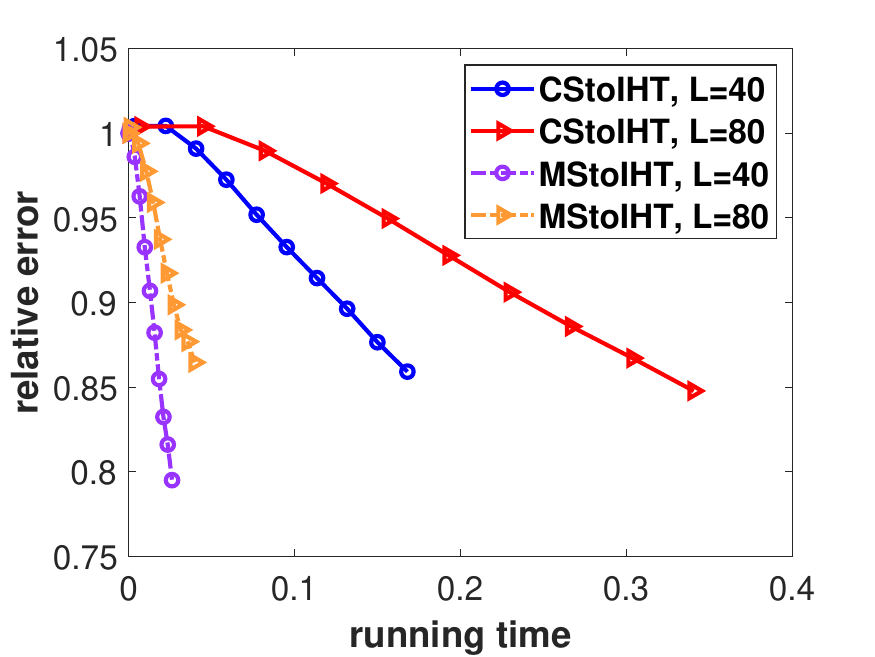}&
\includegraphics[width=.46\textwidth]{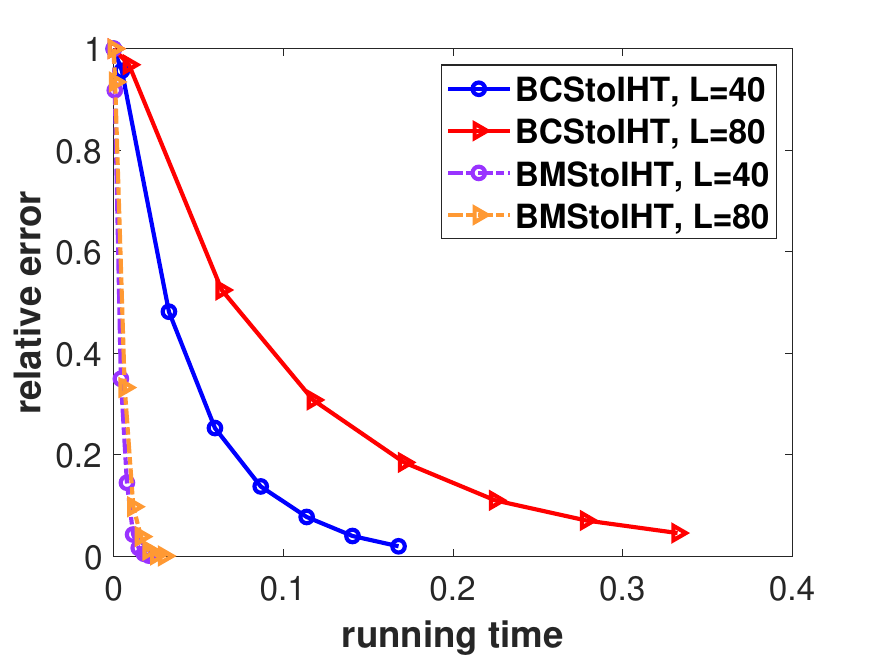}
\end{tabular}
\caption{Comparison of BCStoIHT and BMStoIHT for various numbers of signals to be reconstructed. From left to right: batch sizes are 1 and 10.}\label{fig:StoIHT:LCmp}
\end{figure}

\begin{figure}
\centering
\begin{tabular}{cc}
\includegraphics[width=.46\textwidth]{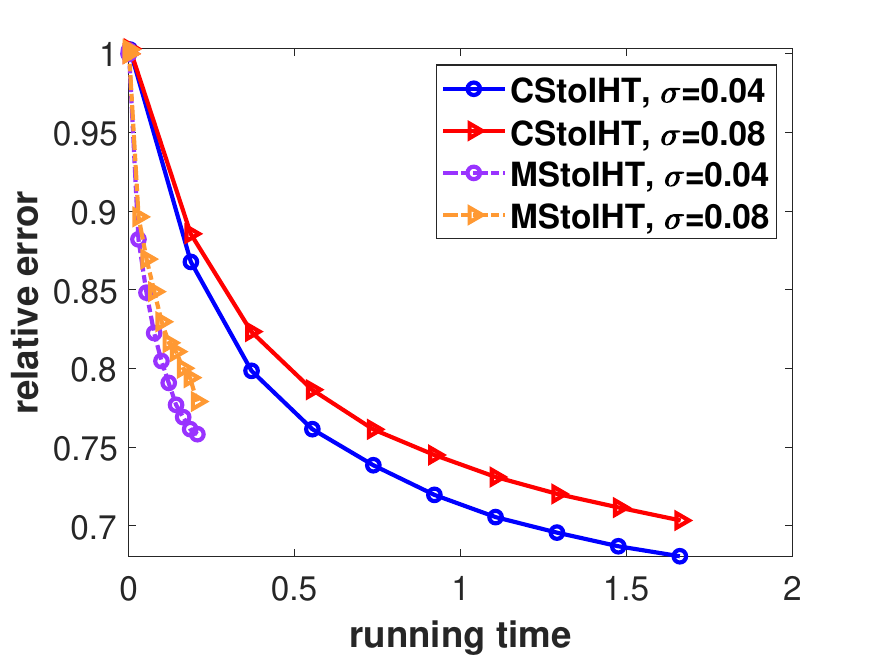}&
\includegraphics[width=.46\textwidth]{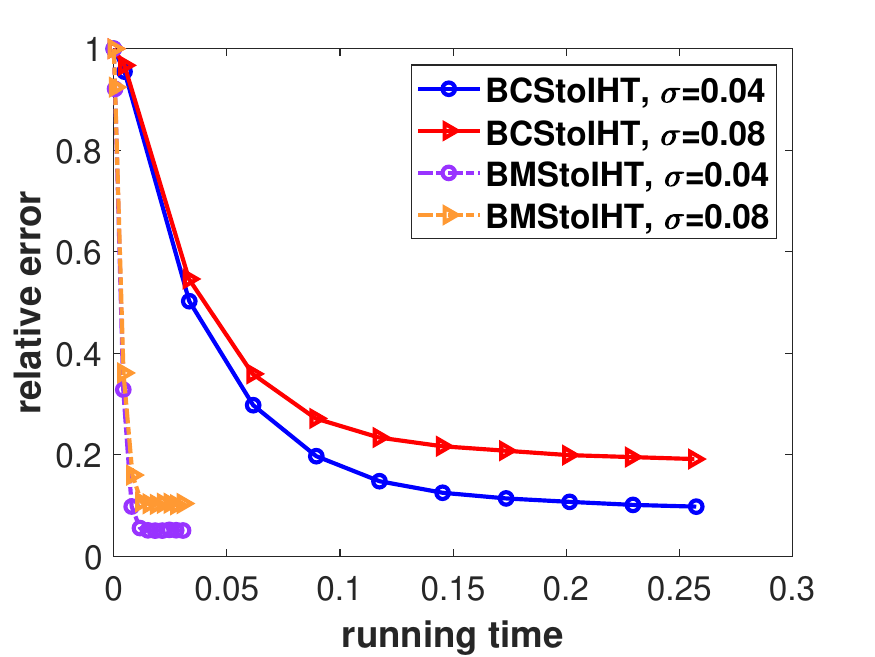}
\end{tabular}
\caption{Comparison of BCStoIHT and BMStoIHT for various noise levels to the measurement matrix. From left to right: batch sizes are 1 and 10.}\label{fig:StoIHT:noise}
\end{figure}

In the second set of tests, we compare CStoGradMP and MStoGradMP in non-batched and batched versions. It is known that StoGradMP usually converges much faster than StoIHT, which is also true for the developed MMV versions. We fix the maximum number of iterations as 30, the stopping criteria tolerance $\varepsilon=10^{-5}$, $\gamma=1$, and the batch size as 1 (non-batched version). Similar to the previous tests, we create a $100\times 200$ random matrix whose columns are normalized, and fix the number of signals $L=40$.  Figure~\ref{fig:StoGradMP:sCmp} shows the results obtained by CStoGradMP and MStoGradMP with sparsity level $k\in\{70,90\}$. Note that for better visualization, we skip the starting point (0,1) for all relative error plots, and use the base 10 logarithmic scale for the horizontal axis of running time since the MStoGradMP takes much less running time than CStoGradMP after the same number of iterations. Unlike StoIHT and MStoIHT, both CStoGradMP and MStoGradMP require that the sparsity level is no more than $n/2$, i.e., 100 in our case. As the sparsity $k$ increases, the operator \verb"pinv" for computing the pseudo-inverse matrix becomes more computationally expensive for matrices with more columns than their rank, which results in the significant growth of running time. For sparse signal matrices, StoGradMP performs better than StoIHT in terms of convergence. Next, we set the number of signals as $20,80$, and get the results shown in Figure~\ref{fig:StoGradMP:LCmp}. It can be seen that MStoGradMP always takes less running time with even higher accuracy than the sequential CStoGradMP. We also discovered that the computation speedup of MStoIHT is almost constant with respect to the number of signals to be reconstructed. The robustness comparison is shown in Figure~\ref{fig:StoGradMP:noise}, where the noise level ranges in $\{0.04,0.08\}$. Furthermore, it is empirically shown that the BMStoGradMP performs much better than BMStoIHT considering their respective convergence behavior and robustness.

\begin{figure}
\centering
\begin{tabular}{cc}
\includegraphics[width=.46\textwidth]{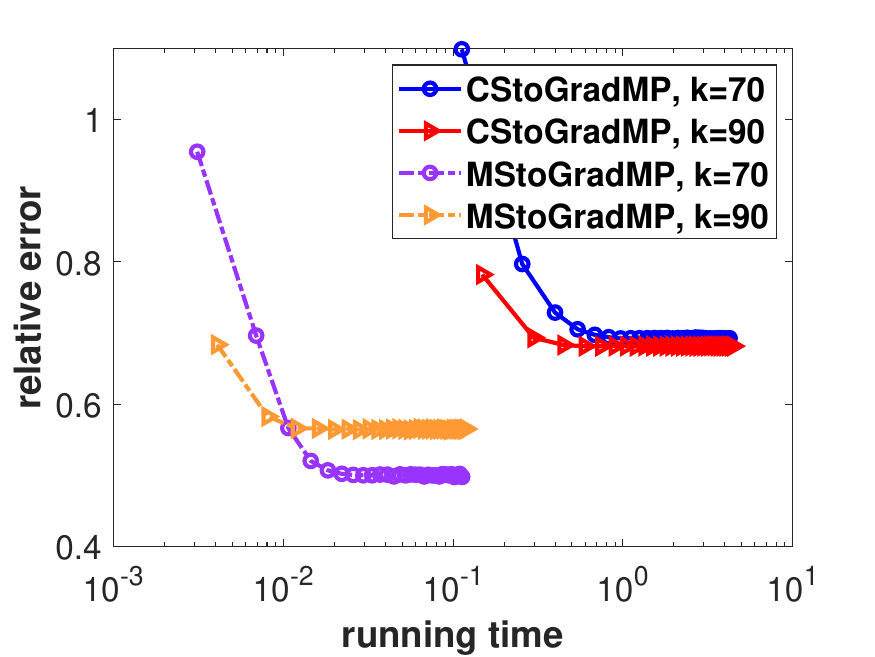}&
\includegraphics[width=.46\textwidth]{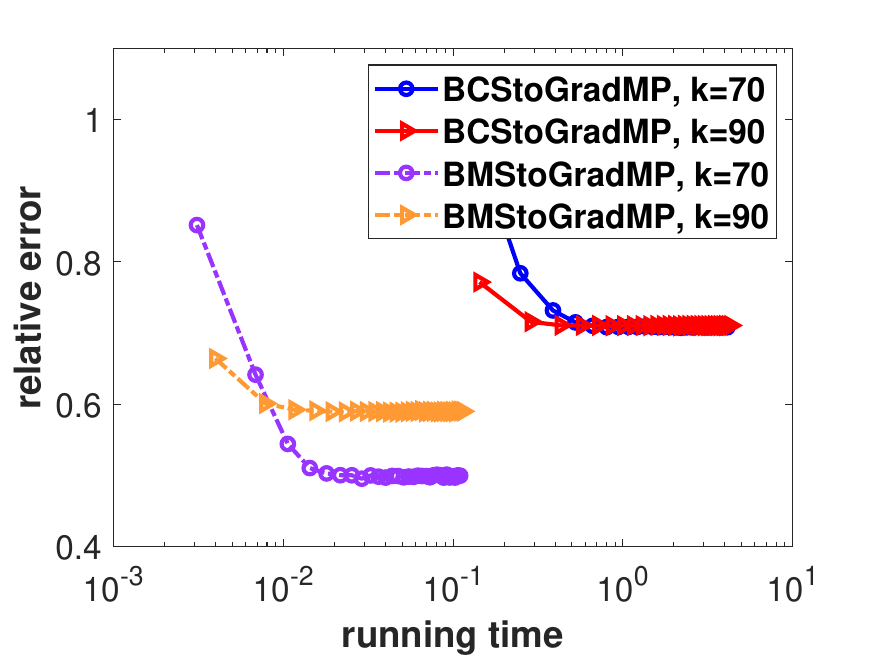}
\end{tabular}
\caption{Comparison of BCStoGradMP and BMStoGradMP with various sparsity levels. From left to right: batch sizes are 1 and 10.}\label{fig:StoGradMP:sCmp}

\end{figure}

\begin{figure}
\centering
\begin{tabular}{cc}
\includegraphics[width=.46\textwidth]{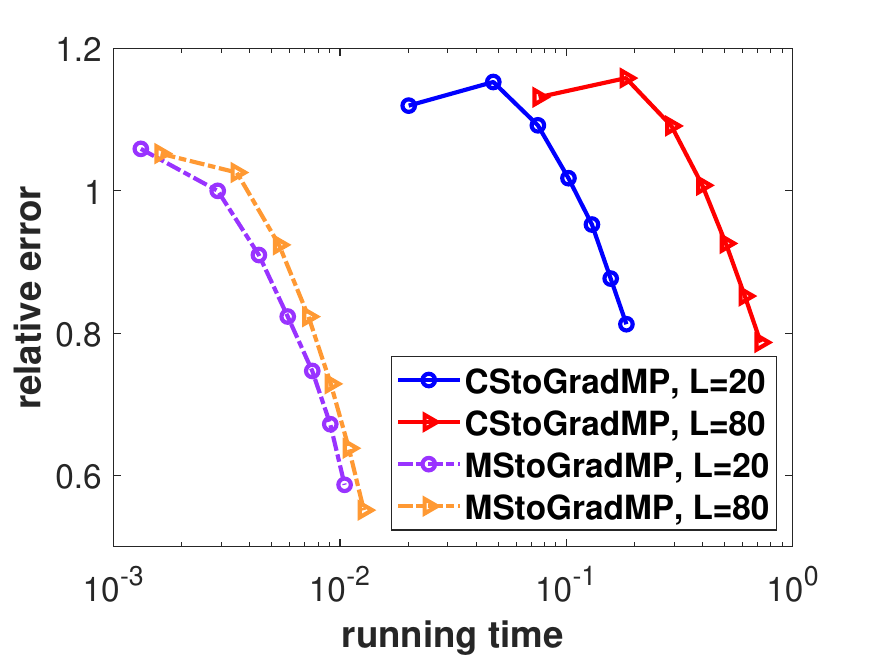}&
\includegraphics[width=.46\textwidth]{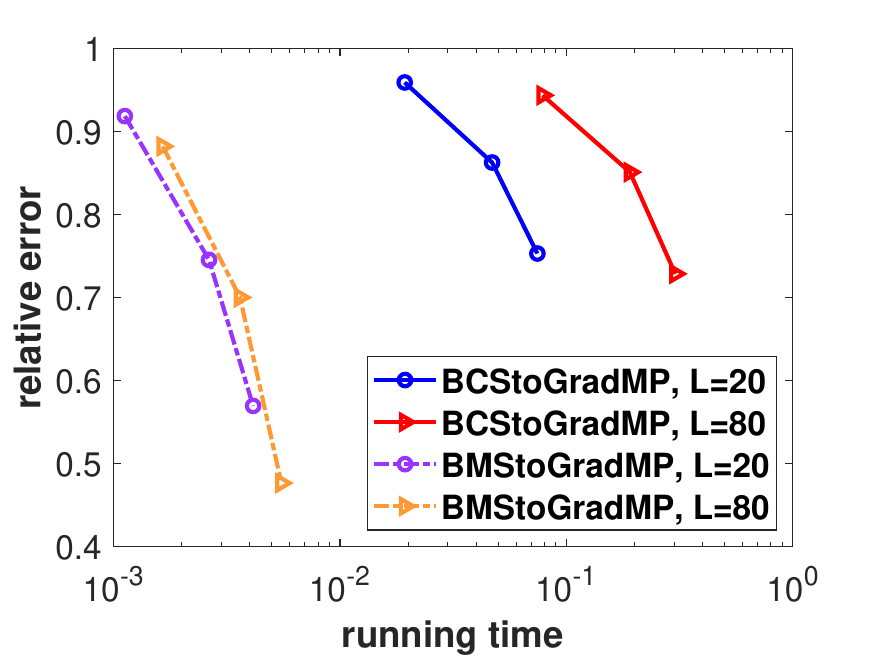}
\end{tabular}
\caption{Comparison of BCStoGradMP and BMStoGradMP with various numbers of signals. From left to right: batch sizes are 1 and 10.}\label{fig:StoGradMP:LCmp}

\end{figure}

\begin{figure}
\centering
\begin{tabular}{cc}
\includegraphics[width=.46\textwidth]{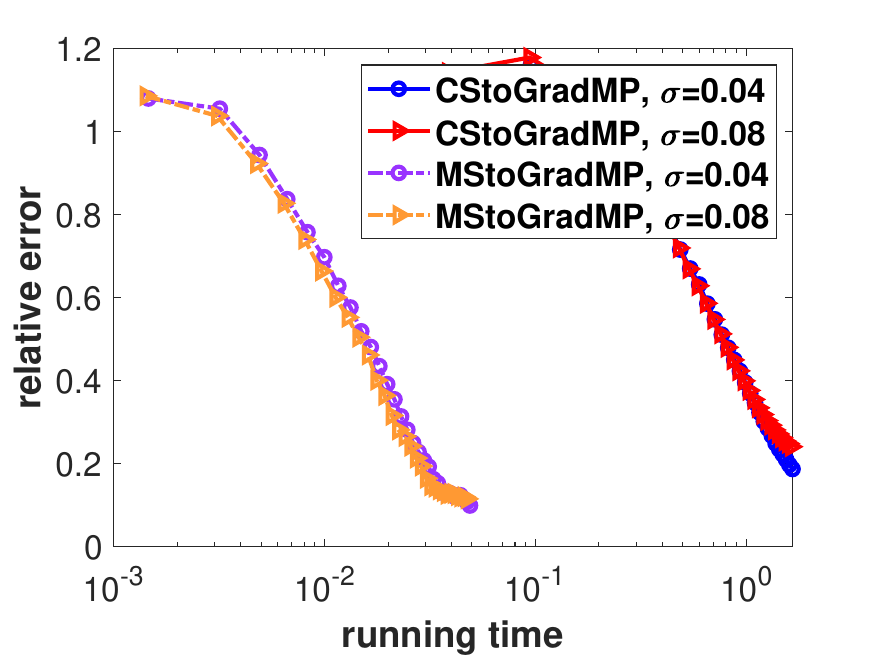}&
\includegraphics[width=.46\textwidth]{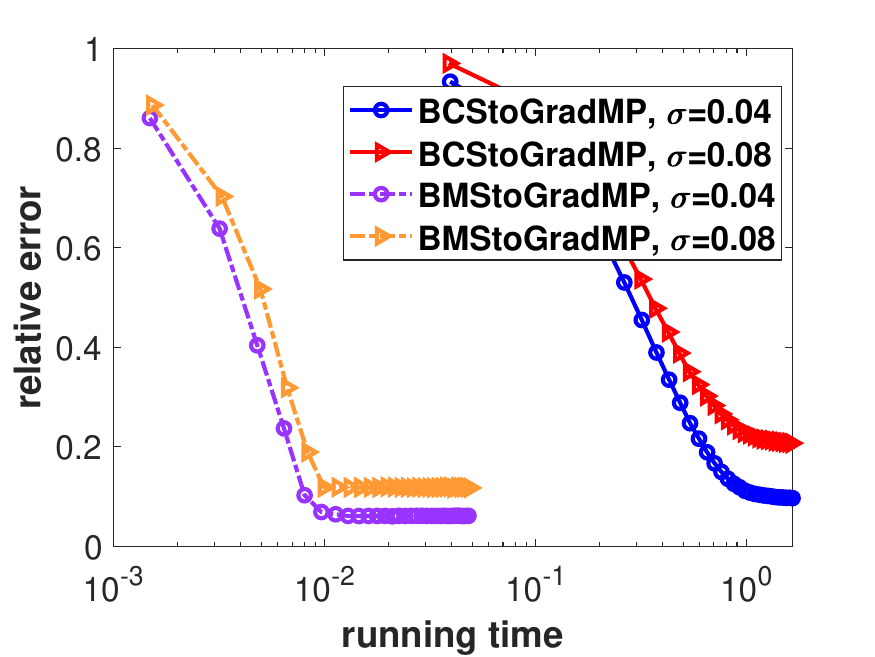}
\end{tabular}
\caption{Comparison of BCStoGradMP and BMStoGradMP with various noise levels. From left to right: batch sizes are 1 and 10.}\label{fig:StoGradMP:noise}

\end{figure}

Lastly, we compare our proposed BMStoIHT and BMStoGradMP with the M-FOCUSS \cite{cotter2005sparse}, and state-of-the-art MMV greedy algorithms \cite{blanchard2014greedy} including M-OMP, M-SP, M-CoSaMP and M-HTP, using the previous noise-free data set with $k=10$, $L=40$, the maximum number of iterations as 100 and the stopping criteria tolerance $\varepsilon=10^{-5}$. The batch size for both BMStoIHT and BMStoGradMP is fixed as 20. The reconstruction errors and running times for all methods are listed in Table~\ref{tab:exp1}, which shows that BMStoGradMP can achieve comparable reconstruction accuracy as M-OMP, M-SP, M-CoSaMP and M-HTP but with the least running time.

\begin{table}
\centering
\begin{tabular}{p{1in}p{1in}c}
\hline\hline
Method & Error & Runtime (sec.)\\ \hline
BMStoIHT & $5.64\times10^{-7}$ & 0.0184\\
BMStoGradMP & $1.05\times10^{-15}$ & 0.0023\\
M-FOCUSS & $3.80\times 10^{-3}$ & 0.0190\\
M-OMP & $7.45\times 10^{-16}$ & 0.0492\\
M-SP & $6.17\times 10^{-16}$ & 0.0217\\
M-CoSaMP & $8.22\times 10^{-16}$ & 0.0218\\
M-HTP & $5.33\times 10^{-16}$ & 0.0912 \\
\hline\hline
\end{tabular}
\vspace{4pt}
\caption{Comparison of the proposed algorithms and other MMV algorithms.}\label{tab:exp1}
\vspace{-4pt}
\end{table}

\subsection{Joint Sparse Video Sequence Recovery}
In this set of experiments, we compare the proposed Algorithm~\ref{alg:BMStoGradMP}, the split Bregman algorithm for constrained MMV problem (SBC) \cite[Algorithm~2]{Jian2015split}, M-FOCUSS, M-OMP, M-SP, M-CoSaMP and M-HTP, on joint sparse video sequence reconstruction. Note that SBC is based on $\ell_1$-minimization rather than $\ell_0$-norm constrained least-squares. We first download a candle video consisting of 75 frames from the Dynamic Texture Toolbox in \url{http://www.vision.jhu.edu/code/}. In order to make the test video sequence possess a joint sparse structure, we extract 11 frames of the original data, i.e., frames 1 to 7, 29, 37, 69 and 70, each of which is of size $80 \times 30$. Then we create a data matrix $X \in \mathbb{R}^{2400 \times 11}$, whose columns are a vectorization of all video frames. To further obtain a sparse representation of $X$, K-SVD \cite{aharon2006ksvd} is applied to obtain a dictionary $\Psi \in \mathbb{R}^{2400 \times 50}$ for $X$. The K-SVD dictionary $\Psi$ and the support of the corresponding coefficient matrix $\Theta$ for the extracted 11 frames are shown in Figure \ref{Dksvd} (a) and (b). Some selected columns of the dictionary $\Psi$, namely \emph{atoms}, are reshaped as an image of size $80\times 30$ illustrated in Figure \ref{Dksvd} (c) and (d).
It can be seen that these 11 frames are nearly joint sparse under the learned dictionary. The relative error of using this K-SVD dictionary $\Psi$ to represent the data matrix $X$ is $\frac{\|X-\Psi \Theta\|_F}{\|X\|_F}= 0.0870$. A Gaussian random matrix $\Phi \in \mathbb{R}^{60 \times 2400}$ with zero mean and unit variance is set as a sensing matrix, which is used to measure this data matrix. That is, the measurements $Y\in \mathbb{R}^{60\times 11}$ are generated via $Y=\Phi X$. Given the measurements $Y$ and the new sparse representation dictionary $A = \Phi\Psi$, we then apply Algorithm~\ref{alg:BMStoGradMP} and SBC to recover the joint sparse coefficient matrix $\hat{\Theta}$.
In Algorithm~\ref{alg:BMStoGradMP}, the sparsity level $k$ is set as $10$, and the block size $b$ is set as $3$, which implies that there are $d=20$ blocks. In addition, we set $\eta_1 = \eta_2 = 1$. Both StoGradMP and SBC stop when the residual error reaches a tolerance threshold $\tau = 10^{-6}$, i.e., $\|Y-A\hat{\Theta}\|_F \leq \tau$. In Table~\ref{tab:exp2}, we compare the reconstruction error, i.e., $\|Y-A\hat{\Theta}\|_F/\norm{Y}_F$, and running time for all methods being compared. One can see that the proposed Algorithm~\ref{alg:BMStoGradMP} is faster than all other comparing methods with relatively high accuracy.

\begin{figure}[h]
\centering
\begin{tabular}{cc}
\includegraphics[width=.5\textwidth]{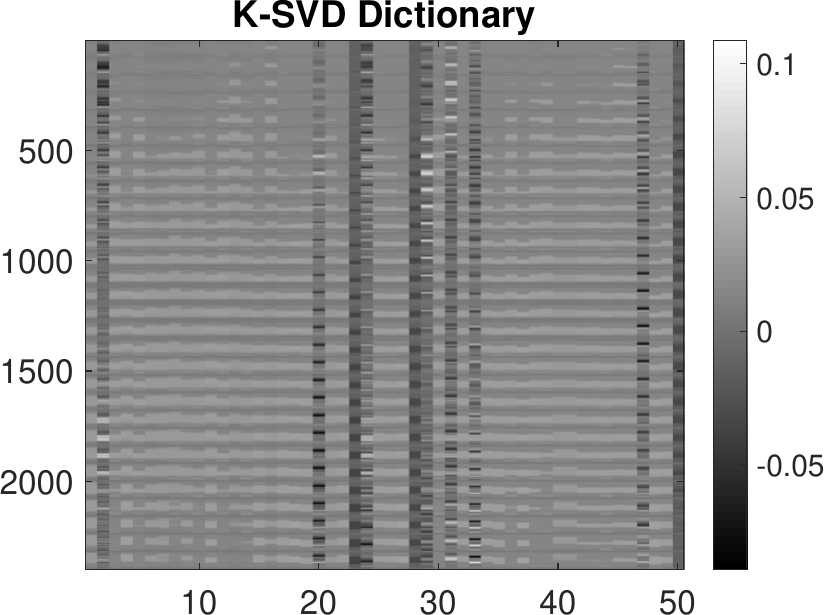}&
\includegraphics[width=.46\textwidth]{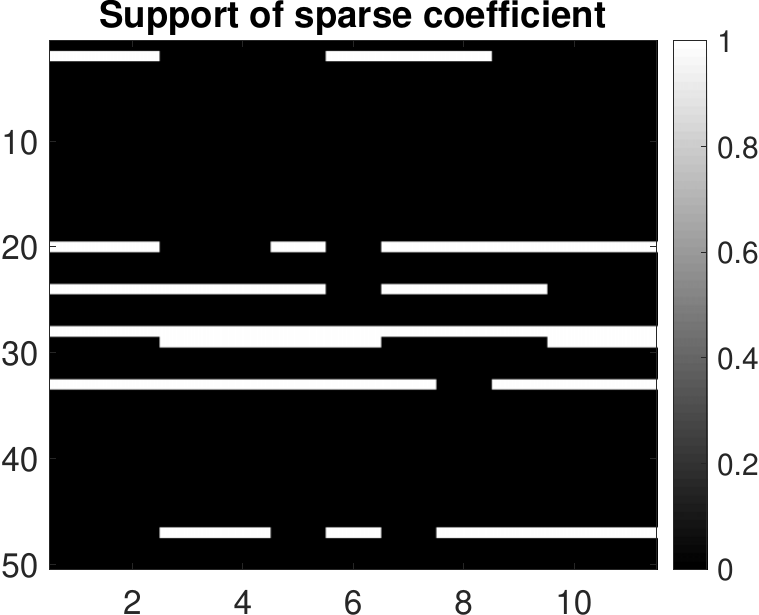}\\
(a)&(b)\\[4pt]
\includegraphics[width=.46\textwidth]{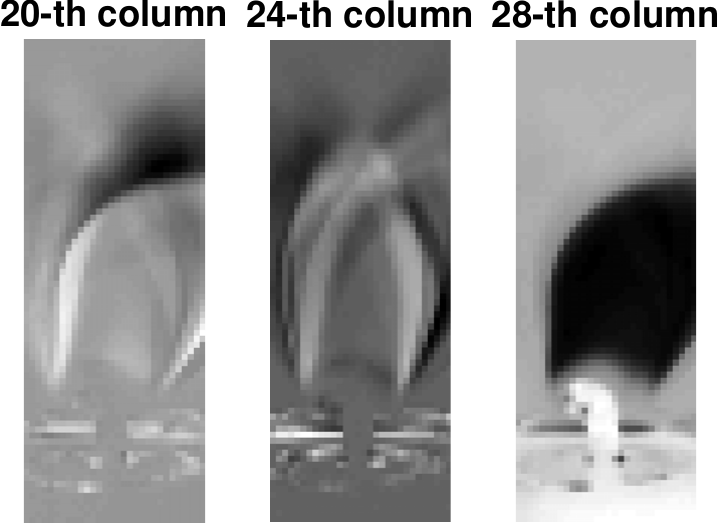}&
\includegraphics[width=.46\textwidth]{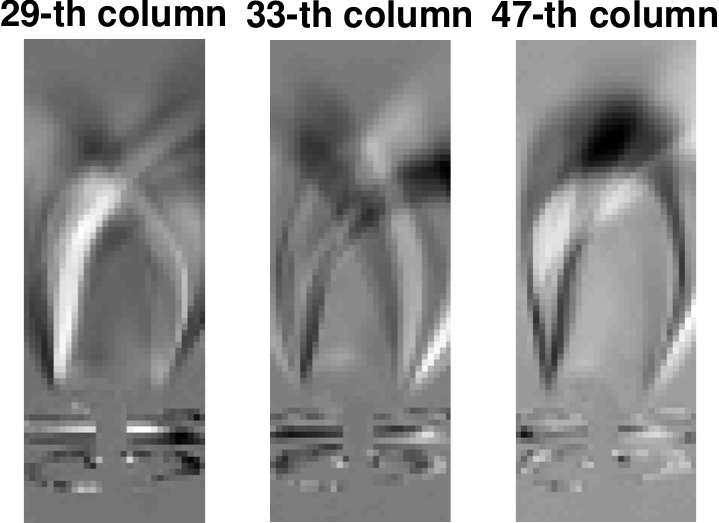}\\
(c)&(d)
\end{tabular}
\caption{(a)~K-SVD dictionary learned from the total 75 frames from the candle video. (b) Support of sparse coefficient matrix for extracted 11 frames. The sparse coefficient matrix has non-zero entries on white area. (c-d)~Some columns of the learned K-SVD dictionary.}
\label{Dksvd}
\end{figure}

\begin{table}
\centering
\begin{tabular}{p{1in}p{1in}c}
\hline\hline
Method & Error & Runtime (sec.) \\ \hline
BMStoGradMP & $3.09\times10^{-15}$ & $2.56\times10^{-4}$\\
M-FOCUSS & $3.21\times 10^{-3}$ & 0.2452\\
M-OMP & $2.51\times 10^{-16}$ & 0.0743\\
M-SP & $2.06\times 10^{-16}$ & 0.1297\\
M-CoSaMP & $4.07\times 10^{-16}$ & 0.0115\\
M-HTP & $5.72\times 10^{-2}$ & 0.0134 \\
SBC& $1.19\times 10^{-8}$ & $6.28\times10^{-4}$\\
\hline\hline
\end{tabular}
\vspace{4pt}
\caption{Performance comparison on joint sparse video recovery.}\label{tab:exp2}
\vspace{-4pt}
\end{table}

\section{Conclusions}\label{sec:con}
In this paper, we study the MMV joint sparse signal reconstruction problem, which is of great importance in a large amount of signal processing applications. One straightforward way is to columnwise concatenate results obtained by solving the SMV sparse signal recovery problem. However, concatenated SMV algorithms may not enforce joint sparsity of the approximated solutions. To address this issue, we propose two stochastic greedy algorithms, MStoIHT and MStoGradMP, together with their respective accelerated versions by applying the mini-batching technique. Our convergence analysis has shown that the proposed algorithms converge faster than their concatenated SMV counterparts. Moreover, we theoretically justify the applicability of the proposed algorithms to the distributed compressive sensing problem. To the best of our knowledge, this is the first work to use the term ``concatenated SMV algorithms'' with theoretical discussions on their convergence. A variety of numerical experiments on linear systems and video recovery have demonstrated that the proposed algorithms outperform the concatenated SMV algorithms in terms of efficiency, accuracy and robustness to the noise.

\section*{Appendix}
\renewcommand{\thesubsection}{\Alph{subsection}}
\subsection{Proof of Lemma~\ref{lem:rss}}
\begin{proof}
For any $X',X\in\mathbb{R}^{n\times L}$ with $|\supp_{\mathcal{D}}^r(X)\cup\supp_{\mathcal{D}}^r(X')|\leq k$, all columns of $X'$ and $X$ are $k$-sparse and $|\supp_{\mathcal{D}}(X_{\cdot,j})\cup\supp_{\mathcal{D}}^r(X'_{\cdot,j})|\leq k$. By \eqref{eqn:df} and $\mathcal{D}$-RSS of each $g_{i,j}$, we have for $i=1,\ldots,M$
\[\begin{aligned}
\norm{\frac{\partial }{\partial X}f_i(X)-\frac{\partial }{\partial X}f_i(X')}_F
&=\sqrt{\sum_{j=1}^L\norm{\nabla g_{i,j}(X_{\cdot,j})-\nabla g_{i,j}(X_{\cdot,j}')}_2^2}\\
&\leq \max\limits_{1\leq j\leq L}\rho_k^+(j)\norm{X-X'}_F:=\mu_k^+(i)\norm{X-X'}_F.
\end{aligned}\]
\end{proof}

\subsection{Proof of Lemma~\ref{lem:rsc}}
\begin{proof}
Similar to the previous lemma, for any $X',X\in\mathbb{R}^{n\times L}$ with $|\supp_{\mathcal{D}}^r(X)\cup\supp_{\mathcal{D}}^r(X')|\leq k$, we have
\[\begin{aligned}
&\phantom{=}F(X')-F(X)-\langle \frac{\partial }{\partial X}F(X),X'-X\rangle\\
&=\frac1L\sum_{j=1}^L\widehat{F_j}(X'_{\cdot,j})-\widehat{F_j}(X_{\cdot,j})-\frac1L\sum_{j=1}^L\langle \nabla \widehat{F_j}(X_{\cdot,j}),X'_{\cdot,j}-X_{\cdot,j}\rangle\\
&\geq \frac1L\sum_{j=1}^L\frac{\rho_{k,j}^{-}}{2}\norm{X_{\cdot,j}-X'_{\cdot,j}}_2^2\\
&\geq \frac{\mu_k^{-}}{2}\norm{X-X'}_F^2,
\end{aligned}\]
where $\mu_k^{-}=\min\limits_{1\leq j\leq L}\rho_{k,j}^-$.
\end{proof}

\section*{Acknowledgments}
The initial research for this effort was conducted at the Research Collaboration Workshop for Women in Data Science and
Mathematics, July 17-21 held at ICERM. Funding for the workshop was provided by ICERM, AWM and DIMACS (NSF grant CCF-1144502). Qin is supported by NSF DMS-1941197. Li was supported by the NSF grants CCF-1409258, CCF-1704204 and the DARPA Lagrange Program under ONR/SPAWAR contract N660011824020.
Needell is supported by NSF CAREER DMS-1348721 and NSF BIGDATA DMS-1740325.

\bibliographystyle{AIMS}
\bibliography{ref}

\medskip
% The data information below will be filled by AIMS editorial staff
Received xxxx 20xx; revised xxxx 20xx.
\medskip
\end{document}